\newcommand*{\rom}[1]{\expandafter\@slowromancap\romannumeral #1@}
\pgfplotsset{compat=1.10}
\numberwithin{equation}{section}
\newtheorem{thm}{Theorem}[section]
\newtheorem{prop}[thm]{Proposition}
\newtheorem{cor}[thm]{Corollary}
\theoremstyle{definition}
\newtheorem{rmk}[thm]{Remark}
\address{Department of Mathematics, Northeastern University,
	Boston, MA, 02115, USA}
\email{tsvelikhovskiy.b@husky.neu.edu}
\DeclareMathAlphabet\mathbfcal{OMS}{cmsy}{b}{n} 
\begin{document}

\title{On Poincare Polynomials  of  Shuffle Algebra Representations}
\author{Boris Tsvelikhovsky}
\maketitle
\begin{abstract}
In this paper we study some combinatorial properties of the shuffle algebra $S_{q_{1},q_{2}}$ with two complex parameters and, in particular, obtain the Hilbert series for $S_{q_{1},q_{2}}/I$, where $I$ is the ideal generated by $z-\lambda$ for both generic $q_{1},q_{2}$ and satisfying the relation $q_{1}^{a}q_{2}^{b}=1$ with $a, b \in \mathbb{N}$. 
\end{abstract}
{\hypersetup{linkcolor=black}
\tableofcontents
}
\section{Introduction}
\par
The shuffle algebras were introduced by Feigin and Odesskii. These algebras are unital associative subalgebras of $\mathbb{C}\underset{n\in\mathbb{Z}_{>0}}{\bigoplus}\mathbb{C}(z_{1},\hdots,z_{n})^{\mathfrak{S}_{n}}$ with multiplication defined by $$f(z_{1},\hdots,z_{n})*g(z_{1},\hdots,z_{m}):=\mbox{Sym}\left(f(z_{1},\hdots,z_{n})*g(z_{n+1},\hdots,z_{n+m})\prod\limits_{\substack{i\in \{1,\hdots n\} \\ j\in \{n+1,\hdots n+m\}}}\mu\left(\frac{z_{i}}{z_{j}}\right)\right),$$ for some function $\mu$. In the work of Schiffmann and Vasserot  \cite{Sch-V} it was shown that subalgebras in Hall algebras of vector bundles of smooth projective curves generated by $\mathbf{1}_{Pic^{d}(X)}:=\sum\limits_{\mathcal{L}\in Pic^{d}(X)}\mathcal{L}$, where $Pic^{d}(X)$ is the set of line bundles over $X$ of degree $d$ are isomorphic to subalgebras of $S$, generated by elements of degree $1$. For a smooth projective curve of genus $g$, one takes $\mu_{g}(x)=x^{g-1}\frac{1-qx}{1-x^{-1}}\prod\limits_{i=1}^{g}(1-\alpha_{i}x^{-1})(1-\overline{\alpha_{i}}x^{-1})$, where $\alpha_{i}$ are the roots of the numerator of the zeta function of the curve, i.e.  $\zeta_{X}(t)=\mbox{exp}(\sum\limits_{d \geq 1}\#X(\mathbb{F}_{q^{d}})\frac{t^{d}}{d})=\frac{\prod\limits_{i=1}^{g}(x-\alpha_{i})(x-\overline{\alpha_{i}})}{(1-t)(1-qt)}$. This isomorphism was made more explicit in case of elliptic curves (elliptic Hall algebras) in \cite{Neg}. In particular, the corresponding shuffle algebra was shown to be generated by polynomials of degree $1$. 
\par
The action of shuffle algebra on the
sum of localized equivariant $K$-groups (with respect to the $T=\mathbb{C}^{*}\times \mathbb{C}^{*}$ induced from the action on $\mathbb{C}^{2}$) of Hilbert schemes of points on $\mathbb{C}^{2}$ was provided in  \cite{F-Ts}.

\par
In present paper we study the Hilbert series of $S_{q_{1},q_{2}}/I$ and $\widetilde{S}_{q_{1},q_{2}}/I$, where $S_{q_{1},q_{2}}$ is a subalgebra of symmetric rational functions over the field $\mathbb{C}(q_{1}, q_{2})$ and $I$ - an ideal, generated by $(z-\lambda)$ for some nonzero $\lambda \in \mathbb{C}$. The precise definitions will be given in subsequent sections. It is shown that these Hilbert series have nice combinatorial counterparts. 
\par
The second section gives the definition and basic properties of the algebras in study.
\par
Section $3$ contains the results for generic and nongeneric $q_{1}$, $q_{2}$, i.e. $q_{1}^{a}q_{2}^{b}=1$ for $S_{q_{1},q_{2}}/I$.  
\par
Section $4$ is devoted to the proof that the Hilbert series for a certain representation of $S^{+}_{q_{1},q_{2}}/I$, the 'positive' subalgebra of $S_{q_{1},q_{2}}$, coincides with the generating function for Catalan numbers.
\par
\textbf{Acknowledgements.}
I would like to thank Boris Feigin for introducing me to the subject, plenty enlightening discussions and numerous helpful suggestions.
\section{Shuffle Algebra $S_{q_{1},q_{2}}$}
\par
We consider the shuffle algebra depending on two parameters $q_{1}, q_{2}$. This
is an associative graded unital algebra $S_{q_{1},q_{2}}:=\bigoplus\limits_{n\geq 0}S_{n}$ over the field $\mathbb{C}(q_{1},q_{2})$, where each component $S_{n}$ consists
of rational functions $F(z_{1}, \hdots, z_{n}) = \frac{f(z_{1}, \hdots, z_{n})}{\triangle_{n}}$, with $f(z_{1}, \hdots, z_{n})$ - a symmetric Laurent polynomial and $\triangle_{n}=\prod\limits_{i<j}(z_{i}-z_{j})$ - the Vandermonde determinant. The product of $F(z_{1}, \hdots, z_{n}) = \frac{f(z_{1}, \hdots, z_{n})}{\triangle_{n}}$ and $G(z_{1}, \hdots, z_{m}) = \frac{g(z_{1}, \hdots, z_{m})}{\triangle_{m}}$ is given by $$F(z_{1}, \hdots, z_{n})*G(z_{1}, \hdots, z_{m})=\mbox{Alt}_{S_{n+m}}\left(F(z_{1}, \hdots, z_{n})G(z_{n+1}, \hdots, z_{n+m})\prod\limits_{\substack{i\in \{1,\hdots n\} \\ j\in \{n+1,\hdots n+m\}}}\mu(z_{i},z_{j})\right),$$ where $\mu(x,y)=\frac{(x-q_{1}y)(x-q_{2}y)}{(x-y)^{2}}$ and $\mbox{Alt}_{S_{k}}\left(H(z_{1},\hdots,z_{k})\right)=\frac{1}{k!}\sum\limits_{\sigma \in S_{k}}sgn(\sigma)H(z_{\sigma(1)},\hdots, z_{\sigma(k)})$.
\par
\begin{prop}
\label{Prop1} 
The algebra $S_{q_{1},q_{2}}$ with the $*$-product is associative.  
\end{prop}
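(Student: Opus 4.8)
**Proof proposal for Proposition (associativity of the $*$-product).**

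The plan is to verify that $(F*G)*H$ and $F*(G*H)$ agree for $F\in S_n$, $G\in S_m$, $H\in S_k$ by writing both as a single sum over $\mathfrak{S}_{n+m+k}$ and checking the summands coincide. The key structural observation is that the antisymmetrization operator $\mathrm{Alt}_{S_{n+m}}$ applied to an expression that is already separately symmetric in the blocks $\{z_1,\dots,z_n\}$ and $\{z_{n+1},\dots,z_{n+m}\}$ can be rewritten, up to the combinatorial factor $\tfrac{n!\,m!}{(n+m)!}$, as a sum over the $\binom{n+m}{n}$ shuffles (coset representatives of $\mathfrak{S}_n\times\mathfrak{S}_m$ in $\mathfrak{S}_{n+m}$). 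So first I would record this lemma: for $\Phi$ symmetric in each block, $\mathrm{Alt}_{S_{n+m}}(\Phi)=\tfrac{n!\,m!}{(n+m)!}\sum_{\text{shuffles }\sigma}\mathrm{sgn}(\sigma)\,\Phi\circ\sigma$, and moreover the result is fully antisymmetric, hence lies in $\frac{1}{\triangle_{n+m}}\cdot(\text{symmetric Laurent polynomials})$ once one checks the cross-term product $\prod_{i\le n<j}\mu(z_i,z_j)$ contributes a factor whose denominator $\prod(z_i-z_j)^2$ is killed after antisymmetrization — this is the point where one must see that $F*G$ actually lands in $S_{n+m}$ (the numerator stays a Laurent polynomial because the simple poles along $z_i=z_j$ cancel in the alternating sum, and the symmetry of $F,G$ within blocks handles the rest).

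Next I would expand $(F*G)*H$. Unwinding one level: $(F*G)*H=\mathrm{Alt}_{S_{n+m+k}}\big((F*G)(z_1,\dots,z_{n+m})\,H(z_{n+m+1},\dots)\prod_{i\le n+m<j}\mu(z_i,z_j)\big)$, and then substituting the definition of $F*G$ and using that $\mathrm{Alt}_{S_{n+m+k}}\circ(\text{partial }\mathrm{Alt})=\text{const}\cdot\mathrm{Alt}_{S_{n+m+k}}$ (idempotency of antisymmetrizers up to scalar, since a sub-antisymmetrization composed with the full one just reshuffles terms), I get
\[
(F*G)*H=\mathrm{Alt}_{S_{n+m+k}}\!\Big(F(z_1,\dots,z_n)G(z_{n+1},\dots,z_{n+m})H(z_{n+m+1},\dots,z_{n+m+k})\!\!\prod_{\substack{i\le n\\ n<j\le n+m}}\!\!\mu(z_i,z_j)\!\!\prod_{\substack{i\le n+m\\ j> n+m}}\!\!\mu(z_i,z_j)\Big).
\]
The product of $\mu$-factors here ranges over exactly the set of pairs $(i,j)$ with $i$ in an earlier block than $j$ among the three ordered blocks of sizes $n,m,k$ — i.e. all pairs $i<j$ across distinct blocks. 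Doing the same computation for $F*(G*H)$ yields an expression with the $\mu$-product over the same index set, because "earlier block than" is the same relation regardless of whether we grouped as $(nm)k$ or $n(mk)$. Hence the two triple products are literally the same function inside the same $\mathrm{Alt}_{S_{n+m+k}}$, proving equality. I would also separately note left/right unitality with the element $1\in S_0$, which is immediate since an empty $\mu$-product is $1$ and $\mathrm{Alt}_{S_n}$ fixes symmetric functions.

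The main obstacle is the bookkeeping in the reduction "$\mathrm{Alt}$ of (partial $\mathrm{Alt}$) $=$ scalar $\times$ $\mathrm{Alt}$": one must be careful that the factor $\prod_{i\le n<j}\mu(z_i,z_j)$ is \emph{not} symmetric in the full variable set, so the usual clean idempotency $e^2=e$ has to be applied only to the genuinely block-symmetric part, and the cross-$\mu$ factors must be tracked as they get permuted — when $\sigma$ moves a variable out of block I into block II the factor $\mu(z_{\sigma^{-1}(i)},z_{\sigma^{-1}(j)})$ changes, and one needs the combinatorial identity that summing over $\mathfrak{S}_n\times\mathfrak{S}_m$ inside $\mathrm{Alt}_{S_{n+m}}$ reproduces $\mathrm{Alt}_{S_{n+m}}$ of the representative term. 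I expect this to be routine but notationally heavy; the conceptual content (both sides = one $\mathrm{Alt}$ over all of $\mathfrak{S}_{n+m+k}$ with the canonical cross-block $\mu$-product) is what makes associativity transparent. An alternative, possibly cleaner route is to identify $S_{q_1,q_2}$ as a subalgebra of the "big shuffle algebra" of \emph{all} (not necessarily symmetric) rational functions with the Feigin–Odesskii product and quote associativity there, but I would prefer the direct computation since it also establishes the closure property $F*G\in S_{n+m}$ along the way.
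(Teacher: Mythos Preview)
Your approach is essentially the same as the paper's: both sides are unwound to a single $\mathrm{Alt}_{S_{n+m+k}}$ applied to $FGH$ times the product of $\mu$-factors over all cross-block pairs, and these two index sets visibly coincide. Your ``main obstacle'' is in fact not one: the outer factor $H\prod_{i\le n+m<j}\mu(z_i,z_j)$ is already symmetric in $z_1,\dots,z_{n+m}$ (the product runs over all such $i$), so the inner $\mathrm{Alt}_{S_{n+m}}$ pulls through that factor and the idempotency $\mathrm{Alt}_{S_{n+m+k}}\circ\mathrm{Alt}_{S_{n+m}}=\mathrm{Alt}_{S_{n+m+k}}$ applies cleanly, exactly as in the paper's computation.
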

\begin{proof}
Let $F\in S_{l}$, $G\in S_{m}$ and $H\in S_{n}$. Then $$(l+m+n)!(F*G)*H=\sum\limits_{\sigma \in S_{l+m+n}}sgn(\sigma)\sigma\left((F*G)H\prod\limits_{\substack{i\in \{1,\hdots, l+m\} \\ j\in \{l+m+1,\hdots, l+m+n\}}}\mu(z_{i},z_{j})\right)=$$ $$=\sum\limits_{\sigma \in S_{l+m+n}}sgn(\sigma)\sigma\left(\sum\limits_{\tau \in S_{l+m}}sgn(\tau)\left(\frac{1}{(l+m)!}FG\prod\limits_{\substack{i'\in \{1,\hdots, l\} \\ j'\in \{l+1,\hdots, l+m\}}}\mu(z_{i'},z_{j'})\right)H\prod\limits_{\substack{i\in \{1,\hdots, l+m\} \\ j\in \{l+m+1,\hdots, l+m+n\}}}\mu(z_{i},z_{j})\right)=$$ $$=\sum\limits_{\sigma \in S_{l+m+n}}sgn(\sigma)\sigma\left(FGH\prod\limits_{\substack{i'\in \{1,\hdots, l\} \\ j'\in \{l+1,\hdots, l+m\}}}\mu(z_{i'},z_{j'})\prod\limits_{\substack{i\in \{1,\hdots, l+m\} \\ j\in \{l+m+1,\hdots, l+m+n\}}}\mu(z_{i},z_{j})\right).$$
\par
Performing analogous manipulations, one can show that $$(l+m+n)!F*(G*H)=\sum\limits_{\sigma \in S_{l+m+n}}sgn(\sigma)\sigma\left(FGH\prod\limits_{\substack{i'\in \{1,\hdots, l\} \\ j'\in \{l+1,\hdots, l+m+n\}}}\mu(z_{i'},z_{j'})\prod\limits_{\substack{i\in \{l+1,\hdots, l+m\} \\ j\in \{l+m+1,\hdots, l+m+n\}}}\mu(z_{i},z_{j})\right).$$
\par
The above implies the equality $(F*G)*H=F*(G*H)$.
\end{proof}
\par
We will need a few more properties of the algebra $S_{q_{1},q_{2}}$, summarized below.
\begin{thm}
\label{Thm1} For generic $q_{1}, q_{2}$ (such that $q_{1}^{a}q_{2}^{b}\neq 1$ for any $a,b \in \mathbb{N}$), $S_{q_{1},q_{2}}$ is generated by its  first graded component $S_{1}$. The ideal of relations has quadratic generators (from $S_{2}$) of the form
\begin{equation}
 \begin{multlined}
 \label{eq:(100)}
 q_{1}q_{2}(z^{m} *z^{n}+z^{n+2}*z^{m-2}) + z^{n}*z^{m} + z^{m-2}*z^{n+2} +\\ +(q_{1} + q_{2})(z^{m-1}*z^{n+1} + z^{n+1}* z^{m-1}) = 0
 \end{multlined}
  \end{equation}
  \end{thm}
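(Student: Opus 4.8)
The plan is to establish two things: (1) that the degree-$n$ component $S_n$ is spanned by $n$-fold $*$-products of elements of $S_1$, and (2) that all relations among such products follow from the quadratic relations \eqref{eq:(100)}. For (1), I would first compute the $*$-product $z^{a_1}*z^{a_2}*\cdots*z^{a_n}$ explicitly. Expanding the definition of the product and using associativity (Proposition \ref{Prop1}), one gets
\[
z^{a_1}*\cdots*z^{a_n}=\mathrm{Alt}_{S_n}\!\left(z_1^{a_1}\cdots z_n^{a_n}\prod_{i<j}\mu(z_i,z_j)\right)=\frac{1}{\triangle_n}\,\mathrm{Alt}_{S_n}\!\left(z_1^{a_1}\cdots z_n^{a_n}\prod_{i<j}\frac{(z_i-q_1z_j)(z_i-q_2z_j)}{z_i-z_j}\right),
\]
so the numerator of such a product is $\triangle_n$ times an antisymmetrization; dividing the antisymmetrization by $\triangle_n$ yields a symmetric Laurent polynomial. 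The key point is that as $(a_1,\dots,a_n)$ ranges over all integer tuples, these functions span $S_n$: one shows that the leading terms (in a suitable monomial order, e.g. on the symmetric functions $m_\lambda$ indexed by partitions) of $z^{a_1}*\cdots*z^{a_n}$ are triangular with respect to the monomial symmetric functions, because $\prod_{i<j}\mu(z_i,z_j)/\triangle_n$ has an invertible "leading coefficient" (a power of $q_1q_2$ coming from the highest-degree term of each $\mu$). Hence the products of degree-$1$ elements exhaust $S_n$.

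For (2), I would verify directly that the left-hand side of \eqref{eq:(100)} vanishes in $S_2$ — this is the routine computation of writing each of the six terms $z^i*z^j=\mathrm{Alt}_{S_2}\!\big(z_1^iz_2^j\,\mu(z_1,z_2)\big)=\frac{1}{2}\big(z_1^iz_2^j-z_1^jz_2^i\big)\frac{(z_1-q_1z_2)(z_1-q_2z_2)}{z_1-z_2}$ and checking the linear combination collapses to zero after clearing $\triangle_2=z_1-z_2$. Then I would argue that these relations suffice: since $S_1$ has basis $\{z^k\}_{k\in\mathbb Z}$, the free associative algebra on $S_1$ surjects onto $S_{q_1,q_2}$, and I must show the kernel is generated in degree $2$. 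The relations \eqref{eq:(100)} allow one to rewrite any product $z^{a_1}*\cdots*z^{a_n}$ in terms of "standard" monomials — those with weakly increasing (or otherwise normalized) exponent sequences up to a bounded spread — and a counting/dimension argument shows the number of such standard monomials in degree $n$ matches $\dim S_n$ (computed from part (1) via the triangularity).

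The main obstacle I expect is part (2): proving that the quadratic relations generate the \emph{entire} ideal of relations, rather than merely checking \eqref{eq:(100)} holds. This requires either an explicit rewriting system (a diamond-lemma / Bergman-style confluence argument on the monomials $z^{a_1}*\cdots*z^{a_n}$, showing any straightening of exponents using \eqref{eq:(100)} is well-defined independent of order) or an independent computation of the Hilbert series of the algebra presented by $S_1$ modulo the quadratic relations, matched against the Hilbert series of $S_{q_1,q_2}$ obtained from part (1). The genericity hypothesis $q_1^aq_2^b\neq 1$ enters precisely here: it guarantees that no "extra" relations appear (poles do not accidentally cancel to produce lower-degree elements), so the denominators in $\mu$ never degenerate and the triangularity in part (1) is genuinely invertible. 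I would carry out part (1) first to pin down $\dim S_n$, then use that target dimension to drive the straightening argument in part (2).
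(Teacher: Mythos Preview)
Your plan diverges from the paper's proof in both halves, and Part (1) as written has a real gap.

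For generation by $S_1$, the paper does \emph{not} argue by triangularity. Instead it observes that the numerators of elements of $S_1^{*n}$ form an ideal in $\mathbb{C}[z_1^{\pm1},\ldots,z_n^{\pm1}]^{\mathfrak{S}_n}$ (since multiplying $f_1(z)*\cdots*f_n(z)$ by $\mathrm{Sym}(z_1^{a_1}\cdots z_n^{a_n})$ returns $f_1(z)z^{a_1}*\cdots*f_n(z)z^{a_n}\in S_1^{*n}$), and then shows this ideal has no common zeros in $(\mathbb{C}^*)^n$, hence is everything. The no-common-zeros step uses specific ``delta'' elements $\delta_\sigma=f_1^\sigma*\cdots*f_n^\sigma$ with $f_i^\sigma(z)=\prod_{j\ne\sigma(i)}(z-\alpha_j)$ to isolate a single summand of the shuffle product, reducing to the nonvanishing of $\prod_{i<j}(\alpha_{\sigma(i)}-q_1\alpha_{\sigma(j)})(\alpha_{\sigma(i)}-q_2\alpha_{\sigma(j)})$ for some $\sigma$; a cycle argument shows simultaneous vanishing forces $q_1^aq_2^b=1$. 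This is where genericity is actually used --- in Part (1), not Part (2). Your triangularity sketch is too vague to stand: $\mu(z_i,z_j)$ is a homogeneous rational function of degree zero, so there is no evident ``leading monomial'', and your claimed leading coefficient ``a power of $q_1q_2$'' cannot be the full mechanism --- when $q_1^aq_2^b=1$ the generation statement genuinely fails (the paper notes this explicitly in the nongeneric section), yet $q_1q_2\ne 0$ there, so invertibility of $q_1q_2$ alone cannot be what makes Part (1) work.

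For Part (2), your straightening idea matches the paper's (the paper normalizes to monomials with $i_{m+1}\le i_m+1$), but your proposed independence arguments --- diamond lemma or a dimension match against $\dim S_n$ --- are replaced in the paper by a concrete specialization: set $q_1=0$, $q_2=1$. Then $\mu(x,y)=x/(x-y)$, and $z^{i_1}*\cdots*z^{i_k}$ becomes, up to a nonzero constant, $\mathrm{Sym}(z_1^{i_1+k-1}z_2^{i_2+k-2}\cdots z_k^{i_k})/\triangle_k$; the exponents $i_m+k-m$ are weakly decreasing exactly when $i_{m+1}\le i_m+1$, so distinct standard monomials map to distinct monomial symmetric Laurent polynomials over $\triangle_k$, which are independent. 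A short g.c.d.\ argument on the coefficients shows a nontrivial $\mathbb{C}(q_1,q_2)$-relation cannot become trivial after specialization. This sidesteps any Hilbert-series bookkeeping and, notably, uses no genericity hypothesis at all --- so your localization of ``where genericity enters'' is inverted relative to the paper.
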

\begin{proof}
The proof of the first assertion proceeds in $2$ steps. First, we show that the vector subspace
in $S_{n}$, generated by $\underbrace{S_{1} *\hdots * S_{1}}_{n}$ is closed under (ordinary) multiplication by symmetric Laurent polynomials, i.e. the vector spaced formed by the numerators of $\underbrace{S_{1} *\hdots * S_{1}}_{n}$ is an ideal in the ring of symmetric Laurent polynomials in $n$ variables. The next step is to show that
$\underbrace{S_{1} *\hdots * S_{1}}_{n}$ has no common zeros and therefore must coincide with $S_{n}$  (we use that  $S_{n}=\mathbb{C}[z_{1}^{\pm 1},\hdots,z_{n}^{\pm 1}]^{\mathfrak{S}_{n}}$ is finitely generated over $\mathbb{C}$ and, therefore, any proper ideal must be vanishing at some point).
\par
The first step is straightforward: if $F = \frac{f(z_{1},\hdots,z_{n})}{\triangle_{n}} =f_{1}(z)*\hdots *f_{n}(z) \in S_{1} *\hdots * S_{1}$ and $g(z_{1}, \hdots, z_{n})$ is a symmetric Laurent polynomial, i.e. $g(z_{1}, \hdots, z_{n})=\mbox{Sym}(z^{a_{1}}\hdots z^{a_{n}})$, then $gF=\frac{g(z_{1}, \hdots, z_{n})f(z_{1},\hdots,z_{n})}{\triangle_{n}}=f_{1}(z)z^{a_{1}}*\hdots *f_{n}(z)z^{a_{n}}$ also belongs to $\underbrace{S_{1} *\hdots * S_{1}}_{n}$. 
\par
Suppose that all functions from $S_{1}^{*n}$ vanish at a point with coordinates $(\alpha_{1},\hdots, \alpha_{n})$. We consider the collection of $n!$ functions $\delta_{\sigma}:= f^{\sigma}_{1}(z)*f^{\sigma}_{2}(z)*\hdots *f^{\sigma}_{n}(z) \in S_{1}^{*n}$, where $f^{\sigma}_{i}(z)=\prod\limits_{j\neq \sigma(i)}(z-\alpha_{j})$ and $\sigma \in S_{n}$. It is not hard to see that  $\delta_{\sigma}(\alpha_{1},\hdots, \alpha_{n})=c_{\sigma} \prod\limits_{i<j}\frac{ (\alpha_{\sigma(i)}-q_{1}\alpha_{\sigma(j)})(\alpha_{\sigma(i)}-q_{2}\alpha_{\sigma(j)})}{(\alpha_{\sigma(i)}-\alpha_{\sigma(j)})^{2}}$, with $c_{\sigma}\neq 0$. Therefore, we conclude that for a point $(\alpha_{1},\hdots, \alpha_{n})$  to be a common zero of $S_{1}^{*n}$ it is necessary  that all the summands of $1*\hdots *1 = \mbox{Alt}_{S_{n}}\left(\prod\limits_{i<j} \frac{(z_{i}-q_{1}z_{j})(z_{i}-q_{2}z_{j})}{(z_{i}-z_{j})^{2}}\right)$ vanish at this point. But if that was the case, in each of
the summands (corresponding to $\sigma$) there would exist a pair of indices $i<j$, such that either $z_{\sigma(i)} = q_{1}z_{\sigma(j)}$
or $z_{\sigma(i)} = q_{2}z_{\sigma(j)}$. Such a chain of equalities must form a cycle, i.e.
$z_{i_{1}}=q_{1,2}z_{i_{2}}$, $z_{i_{2}}=q_{1,2}z_{i_{3}},$ $\hdots z_{i_{k-1}}=q_{1,2}z_{i_{k}}$,  $z_{i_{k}}=q_{1,2}z_{i_{1}}$ (otherwise a summand with ordering of
$z_{i_{j}}$ reverse to the one in the chain exists and does not vanish), so $z_{i_{1}} = q_{1}^{a}
q_{2}^{b}
z_{i_{1}}$, which is impossible, since neither $z_{i_{1}}=0$ nor $ q_{1}^{a}
q_{2}^{b}=1$, a contradiction.
\par
It is direct to check that relations \eqref{eq:(100)} hold. We verify that the ideal of relations is generated by those in \eqref{eq:(100)}. 
 These relations  allow to rewrite every monomial $z^{i_{1}}* \hdots *z^{i_{k}}$ in such a way that
 \begin{equation}
 \label{monomials}
 i_{m+1}\leq i_{m}+1~~\forall m \in \{1,\hdots, k-1\}.
  \end{equation}
  This can be shown inductively. If there is only one term, the
 assertion is trivial. Otherwise, suppose there is $m$, s.t. $i_{m}\geq i_{m-1}+2$. We treat the cases $i_{m}\geq 
 i_{m-1}+3$ and $i_{m}=i_{m-1}+2$ separately. For the first one, using relation \eqref{eq:(100)}, we write $$z^{i_{m-1}}
 *z^{i_{m}}=-z^{i_{m}-2}*z^{i_{m-1}+2}-q_{1}q_{2}(z^{i_{m}}*z^{i_{m-1}}+z^{i_{m-1}+2}*z^{i_{m}-2})+$$ $$+(q_{1}+q_{2})
 (z^{i_{m}-1}*z^{i_{m-1}+1}+z^{i_{m-1}+1}*z^{i_{m}-1}).$$
\par
When $i_{m}=i_{m-1}+2$, this simplifies to $$z^{i_{m-1}}
 *z^{i_{m-1}+2}=-q_{1}q_{2}z^{i_{m-1}+2}*z^{i_{m-1}}+(q_{1}+q_{2})
 z^{i_{m-1}+1}*z^{i_{m-1}+1}.$$
 \par
 Clearly, in both cases all summands of the r.h.s. either satisfy the condition on the powers of $z$ or the difference $i_{m} - i_{m-1}$ become smaller.
\par
To show that the monomials, satisfying \eqref{monomials}, are linearly independent, we consider the specialization of the parameters to $q_{1}=0, q_{2}=1$. Indeed, any nontrivial $\mathbb{C}(q_{1},q_{2})$-linear relation between monomials $z^{i_{1}}* \hdots *z^{i_{k}}$ with
 $i_{m+1}\leq i_{m}+1~~\forall m \in \{1,\hdots, k\}$ can be multiplied by the l.c.m. of the denominators of the coefficients to induce a nontrivial relation for the specialization. We divide the coefficients by their g.c.d., hence, assume that the g.c.d=$1$.  
 \par
 To observe that a nontrivial relation is induced, we first specialize to $q_{2}=1$. If the relation became trivial, the coefficients of the initial relation (which are elements of $\mathbb{C}[q_{1},q_{2}]$) would be divisible by some power of $(q_{2}-1)$, but that contradicts the assumption that their g.c.d=$1$. Next, again, reduce to the case g.c.d=$1$. Specializing further to $q_{1}=0$, observe that the relation is still not vacuous, as otherwise all the coefficients (now elements of $\mathbb{C}[q_{1}]$) would be divisible by some power of $q_{1}$, hence, have g.c.d$\neq1$. However, $z^{i_{1}}* \hdots *z^{i_{k}}$ up to a constant becomes $\mbox{Alt}\left(\frac{z_{1}^{i_{1}+n-1}z_{2}^{i_{2}+n-2} \hdots z_{k}^{i_{k}}}{\bigtriangleup_{k}}\right)=\frac{\mbox{Sym}(z_{1}^{i_{1}+n-1}z_{2}^{i_{2}+n-2} \hdots z_{k}^{i_{k}})}{\bigtriangleup_{k}}$, with the new powers of $z_{j}$'s being nonicreasing. Therefore, using the fact that monomial symmetric Laurent polynomials are linearly independent, we see that the specializations of $z^{i_{1}}* \hdots *z^{i_{k}}$'s must be independent as well.   
\end{proof}

\section{Hilbert series of $S_{q_{1},q_{2}}/I$}
\par
We take the right ideal $I$ in $S_{q_{1},q_{2}}$, generated by $(z-\lambda)$ for some nonzero $\lambda \in \mathbb{C}$. To an $n$-tuple of points on the triangular grid (as on the picture below) we will associate a point in $(\mathbb{C}^{*})^{n}/S_{n}$ by taking the vertex with coordinates $(a,b)$ (the root vertex has coordinates $(0,0)$ and the point $(a,b)$ is $a$ steps down to the left and $b$ down to the right from the root) on the grid to coordinate $\lambda q_{1}^{a}q_{2}^{b}$ of the point. In particular, the root vertex gives coordinate $\lambda$. An example is provided below (the chosen subset of points is depicted in green). It will be shown that these points are precisely the common zero locus of all $F \in I_{n}$ ($n$th graded component of $I$). 

\par
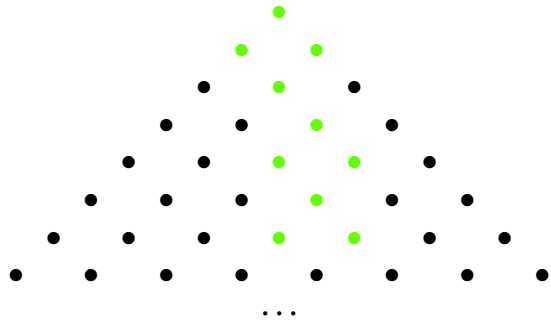
\begin{figure}[H]
	\label{FigI10}
\begin{center}
  \begin{tikzpicture}
    \node[] (1) at (0,0) {$\color{green!60!yellow}{\bullet}$};
    \node[] (2) at (-0.5,-0.5)  {$\color{green!60!yellow}{\bullet}$};
     \node[] (2) at (-1,-1)  {$\bullet$};
      \node[] (2) at (1, -1)  {$\bullet$};
    \node[] (3) at (0.5,-0.5) {$\color{green!60!yellow}{\bullet}$};
     \node[] (4) at (0,-1) {$\color{green!60!yellow}{\bullet}$};
       \node[] (2) at (-1.5,-1.5)  {$\bullet$};
      \node[] (2) at (-0.5, -1.5)  {$\bullet$};
    \node[] (3) at (0.5,-1.5) {$\color{green!60!yellow}{\bullet}$};
    \node[] (3) at (1.5,-1.5) {$\bullet$};
    
       \node[] (2) at (-2,-2)  {$\bullet$};
      \node[] (2) at (-1, -2)  {$\bullet$};
    \node[] (3) at (0,-2) {$\color{green!60!yellow}{\bullet}$};
    \node[] (3) at (1,-2) {$\color{green!60!yellow}{\bullet}$};
    \node[] (3) at (2,-2) {$\bullet$};
    
       \node[] (2) at (-2.5,-2.5)  {$\bullet$};
      \node[] (2) at (-1.5, -2.5)  {$\bullet$};
    \node[] (3) at (0.5,-2.5) {$\color{green!60!yellow}{\bullet}$};
    \node[] (3) at (1.5,-2.5) {$\bullet$};
    \node[] (3) at (2.5,-2.5) {$\bullet$};
    \node[] (3) at (-0.5,-2.5) {$\bullet$};
    
      \node[] (2) at (-3, -3)  {$\bullet$};
    \node[] (3) at (-2,-3) {$\bullet$};
       \node[] (2) at (-1, -3)  {$\bullet$};
    \node[] (3) at (0,-3) {$\color{green!60!yellow}{\bullet}$};
    \node[] (3) at (1,-3) {$\color{green!60!yellow}{\bullet}$};
    \node[] (3) at (2,-3) {$\bullet$};
      \node[] (2) at (3, -3)  {$\bullet$};
      
       \node[] (2) at (-3.5,-3.5)  {$\bullet$};
           \node[] (2) at (-2.5,-3.5)  {$\bullet$};
      \node[] (2) at (-1.5, -3.5)  {$\bullet$};
    \node[] (3) at (0.5,-3.5) {$\bullet$};
    \node[] (3) at (1.5,-3.5) {$\bullet$};
    \node[] (3) at (2.5,-3.5) {$\bullet$};
    \node[] (3) at (-0.5,-3.5) {$\bullet$};
     \node[] (2) at (3.5,-3.5)  {$\bullet$};
     
       \node[] (2) at (0,-4)  {$\hdots$};
      \end{tikzpicture}
      \end{center}
    \caption{point $(\lambda,q_{1}\lambda,q_{2}\lambda,q_{1}q_{2}\lambda, q_{1}q_{2}^{2}\lambda, q_{1}^{2}q_{2}^{2}\lambda,  q_{1}^{3}q_{2}\lambda, q_{1}^{3}q_{2}^{2}\lambda, q_{1}^{3}q_{2}^{3}\lambda,q_{1}^{4}q_{2}^{2}\lambda)$}
    \end{figure}
\subsection{Generic parameters}
\par
The Hilbert series of $S_{q_{1},q_{2}}/I$ are given by the following result.

\par
\begin{thm}
\label{Thm2}
Assume a subset of vertices on the grid is chosen and $(0,0)$ belongs to the subset. The number of rooted binary trees supported on the vertices from the subset is either
zero or $2^{k}$, where $k$ is the number of vertices $(a; b)$ with neighbors
$(a-1; b)$ and $(a; b-1)$ also included in the subset (we denote this set of
vertices by $X$). The multiplicity of the point associated to the subset is also $2^{k}$.
\end{thm}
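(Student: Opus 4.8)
\emph{Combinatorial part, and reduction of the multiplicity statement.} A rooted binary tree supported on a vertex set $V\ni(0,0)$ is precisely a choice, for every non-root $v\in V$, of one of its two neighbours $(a-1,b),(a,b-1)$ that also lies in $V$, to be its parent: since $a+b$ strictly decreases along parent edges there are no cycles, and every chain terminates at $(0,0)$ (the only vertex having no neighbour of this kind), so the resulting graph is automatically a tree. Hence the number of such trees is $0$ if some non-root vertex of $V$ has neither neighbour in $V$, and otherwise equals $\prod_{v}\#\big(\{\text{two neighbours of }v\}\cap V\big)$ over the non-root $v$, which is $2^{|X|}$ since the factor is $2$ exactly at the vertices of $X$. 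For the multiplicity I would first identify $I_n$ with its ideal of numerators $\widetilde I_n$ inside the commutative ring $A_n:=\mathbb C[z_1^{\pm1},\dots,z_n^{\pm1}]^{\mathfrak S_n}$ (writing $I_n=\mathfrak m_\lambda *S_{n-1}$ with $\mathfrak m_\lambda\subset S_1$ the ideal of Laurent polynomials vanishing at $\lambda$; that $\widetilde I_n$ is a genuine ideal of $A_n$ follows by writing a symmetric Laurent polynomial as $\sum_k z_1^kc_k(z_2,\dots,z_n)$ and absorbing $z_1^k$ into the factor vanishing at $\lambda$), so $S_n/I_n\cong A_n/\widetilde I_n$. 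By the paragraph preceding the theorem the reduced locus of $\operatorname{Spec}(A_n/\widetilde I_n)\subseteq(\mathbb C^{*})^n/\mathfrak S_n$ is the finite set $\{p_V\}$ attached to tree-supporting subsets --- alternatively, a common zero $(\alpha_i)$ of the generators must satisfy $\alpha_i=\lambda$ or $\alpha_i\in\{q_1\alpha_j,q_2\alpha_j:j\neq i\}$ for each $i$, and following these relations (no cycle is possible since $q_1^aq_2^b\neq1$) forces $\alpha_i=\lambda q_1^{a_i}q_2^{b_i}$ with $\{(a_i,b_i)\}$ tree-supporting. This scheme is therefore Artinian and $\dim(S_n/I_n)=\sum_V\operatorname{mult}_{p_V}$, so it suffices to show $\operatorname{mult}_{p_V}=2^{|X(V)|}$.

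\emph{Localizing at $p_V$.} Fix a tree-supporting $V$ with $n$ vertices and an ordering $\alpha_i=\lambda q_1^{a_i}q_2^{b_i}$ of $p_V$; the $\alpha_i$ are distinct since $q_1^aq_2^b\neq1$, so $(\mathbb C^{*})^n/\mathfrak S_n$ is \'etale over $(\mathbb C^{*})^n$ near $p_V$ and $\operatorname{mult}_{p_V}$ equals the length of $\mathbb C[z^{\pm1}]$ localized at $(\alpha_i)$ modulo $\widetilde I_n$. Put $\psi_i:=\prod_{j\neq i}\frac{(z_i-q_1z_j)(z_i-q_2z_j)}{z_i-z_j}$, which is regular at $(\alpha_i)$ because the coordinates are distinct, and $u_i:=z_i-\alpha_i$; a short symmetrization computation shows the generators of $\widetilde I_n$ are $\frac1n\sum_i r(z_i)\,g(z_{[n]\setminus i})\,\psi_i$ with $r$ a Laurent polynomial vanishing at $\lambda$ and $g$ symmetric in $n-1$ variables. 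The first key step is to show that, in the local ring, this ideal is generated already by the $n$ ``separated'' elements $(z_i-\lambda)\psi_i$: one inclusion is clear, and for the other, choosing $g=\big(\prod_l(w_l-\alpha_{i_0})\big)^N$ makes all terms with $i\neq i_0$ lie in $\mathfrak m^N$, while $\mathfrak m^N$ is contained in the ideal for $N\gg0$ because the quotient is Artinian.

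\emph{Quadrics and the tree bijection.} Near $p_V$ one has, up to units: $(z_{\mathrm{root}}-\lambda)\psi_{\mathrm{root}}=u_{\mathrm{root}}$; for a non-root $v_i$ with exactly one neighbour in $V$, a single linear form $u_i-q_\bullet u_{\mathrm{parent}(i)}$ (with $q_\bullet\in\{q_1,q_2\}$); and for $v_i\in X$, the product $(u_i-q_1u_{L(i)})(u_i-q_2u_{R(i)})$ of the two linear forms coming from its neighbours $L(i),R(i)$ --- here each vanishing factor $z_i-q_\bullet z_j$ equals exactly $u_i-q_\bullet u_j$, because $\alpha_i=q_\bullet\alpha_j$. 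Using the $n-|X|$ linear generators to eliminate the variables $u_i$ with $v_i\notin X$ turns the local ring into $\mathbb C[[u_w:w\in X]]\big/\big(\ell_w^{(1)}\ell_w^{(2)}:w\in X\big)$, a quotient by $k=|X|$ homogeneous quadrics in $k$ variables, each a product of two linear forms. Such a quotient has length $2^k$ exactly when the quadrics form a regular sequence, i.e.\ when their only common zero is the origin; since that common zero locus is $\bigcup_{c\colon X\to\{1,2\}}\bigcap_{w\in X}\{\ell_w^{(c(w))}=0\}$, one must show that for every choice $c$ the $k$ forms $\ell_w^{(c(w))}$ are linearly independent. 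But a choice $c$ is exactly a rooted binary tree $T_c$ on $V$ (it selects, for each $w\in X$, which neighbour is its parent), and the $n$ forms $u_{\mathrm{root}}$ together with the forms $u_v-q_\bullet u_{\mathrm{parent}_{T_c}(v)}$ over non-root $v$ are linearly independent --- their matrix is triangular with $1$'s on the diagonal when vertices are ordered by distance from the root --- and remain so after eliminating the variables with $v\notin X$, where they become precisely the $\ell_w^{(c(w))}$. Hence the quadrics form a regular sequence, the Hilbert series of the quotient is $\prod_{w\in X}\frac{1-t^2}{1-t}=(1+t)^k$, and $\operatorname{mult}_{p_V}=2^k$. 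If $V$ is not tree-supporting, some non-root $v_{i_0}$ has no neighbour in $V$, so $\psi_{i_0}$ and $z_{i_0}-\lambda$ are both units at $(\alpha_i)$; then $(z_{i_0}-\lambda)\psi_{i_0}$ is a unit in the local ring, the localization vanishes, and $\operatorname{mult}_{p_V}=0$ --- consistent with there being no tree.

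\emph{Expected main obstacle.} The combinatorics and the triangular-matrix independence are routine; the substance is the local multiplicity computation, and within it the reduction of the symmetrized generators of $\widetilde I_n$ to the $n$ separated elements $(z_i-\lambda)\psi_i$ near $p_V$. This is where the precise shape of the factors $\mu(z_i,z_j)=\frac{(z_i-q_1z_j)(z_i-q_2z_j)}{(z_i-z_j)^2}$ --- whose numerator vanishes exactly on the grid-adjacency loci --- is used, and it is what converts the local multiplicity into the tree count $2^{|X|}$.
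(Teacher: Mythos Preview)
Your argument is correct and is considerably more structural than the paper's. The paper establishes the multiplicity by directly exhibiting the Macaulay inverse system: it shows that at the point $p_V$ certain explicit linear combinations of (higher) partial derivatives of every $F\in I_n$ vanish --- one relation for each subset of $X$ --- and asserts that these $2^{|X|}$ relations are independent and exhaustive. This is concrete but somewhat informal: the independence of the derivative relations and the fact that no further relations occur are stated rather than proved in detail.

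Your route is genuinely different. After the \'etale lift and the Artinian separation trick reducing the local ideal to the $n$ generators $(z_i-\lambda)\psi_i$, you recognise the local ring as a complete intersection: $n-|X|$ linear generators and $|X|$ homogeneous quadrics, each a product of two linear forms. The key observation --- that a choice of one linear factor from each quadric is literally a choice of rooted binary tree $T_c$ on $V$, and that the resulting $n$ linear forms are triangular with unit diagonal in the grid height ordering --- simultaneously proves regularity of the sequence and explains \emph{why} the multiplicity equals the tree count. The Hilbert series $(1+t)^{|X|}$ then gives $2^{|X|}$ cleanly. What your approach buys is a rigorous length computation and a transparent bijection between the irreducible components of the tangent cone and the trees; what the paper's approach buys is an explicit list of the differential conditions cutting out the fat point, as illustrated in its worked example for $n=10$.

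One small remark: your sentence ``writing a symmetric Laurent polynomial as $\sum_k z_1^kc_k(z_2,\dots,z_n)$ and absorbing $z_1^k$ into the factor vanishing at $\lambda$'' is correct but terse --- the point is that after pulling the symmetric $h$ inside the $\mathrm{Alt}$, each $c_k$ is symmetric in $z_2,\dots,z_n$, so $h\cdot\big((z-\lambda)*F\big)=\sum_k\big(z^k(z-\lambda)\big)*(c_kF)$ lands back in $I_n$. It may be worth spelling this out.
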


\begin{proof}
The claim about the number of binary trees supported on the given points holds, since constructing trees by connecting vertices from bottom to top row, the only ambiguity comes from the vertices in $X$ and there are precisely $2^{k}$ possible ways to draw the tree.
\par
We first notice that any element $G(z_{1},\hdots, z_{n})$ of the $n$th graded component of the ideal ($I_{n}$) is of the form $(z-\lambda)*F(z_{1},\hdots, z_{n-1})$ with $F(z_{1},\hdots, z_{n-1}) \in S_{n-1}$. More explicitly, we can write $G(z_{1},\hdots, z_{n})=\mbox{Alt}_{S_{n}}\left(F(z_{2}, \hdots, z_{n})(z_{1}-\lambda)\prod\limits_{i\in \{2,\hdots n\}}\mu(z_{1},z_{i})\right)$. Now for a point to be a common zero of all rational functions in $I_{n}$, all summands of \begin{equation}
 \label{eq:(2)}\mbox{Alt}_{S_{n}}\left((z_{1}-\lambda)\prod\limits_{i\in \{2,\hdots n\}}\mu(z_{1},z_{i})\right)
 \end{equation}
  must vanish at this point. For $\sigma \in S_{n}$ the corresponding summand is $(z_{\sigma(1)}-\lambda)\prod\limits_{i\in \{2,\hdots n\}}\mu(z_{\sigma(1)},z_{\sigma(i)})=(z_{\sigma(1)}-\lambda)\prod\limits_{i\neq \sigma(1)}\mu(z_{\sigma(1)},z_{i})$. It vanishes at a point if either $z_{\sigma(1)}=\lambda$ or $z_{\sigma(1)}=q_{1,2}z_{i}$. As $(z_{1}, \hdots, z_{n})$ and $(z_{h(1)}, \hdots, z_{h(n)})$ represent the same point for any $h \in S_{n}$ and for generic $q_{1}, q_{2}$ we are not allowed to have cycles, i.e. chains of equalities of type $z_{i_{1}}=q_{1,2}z_{i_{2}}$, $z_{i_{2}}=q_{1,2}z_{i_{3}}$ $\hdots z_{i_{k-1}}=q_{1,2}z_{i_{k}}$,  $z_{i_{k}}=q_{1,2}z_{i_{1}}$, for some representative of the point we must have $z_{1}=\lambda$, $z_{2}=q_{1,2}z_{1}=q_{1,2}\lambda$, etc. In other words, the representative of the point at which every element of $I_{n}$ vanishes can be chosen so, that $z_{1}=\lambda$, $z_{i\geq 2}=q_{1,2}z_{j\geq 2}$ for some $j<i$ and all $z_{i}$'s are distinct.
\par
At some points not only the rational function \eqref{eq:(2)} is zero, but some (linear combinations) of its partial and higher order derivatives (linear combinations of partial derivatives and higher order derivatives arise separately) vanish as well. It is not hard to see that this happens if there are vertices $(a; b)$ with both neighbors
$(a-1; b)$ and $(a; b-1)$ in the subset of the grid, which the coordinates of the point are derived from. The simplest example is the point $(\lambda,q_{1}\lambda,q_{2}\lambda,q_{1}q_{2}\lambda)$, where both $L(\lambda,q_{1}\lambda,q_{2}\lambda,q_{1}q_{2}\lambda)=L'_{z_{4}}(\lambda,q_{1}\lambda,q_{2}\lambda,q_{1}q_{2}\lambda)$ are zero for any $L(z_{1}, z_{2}, z_{3}, z_{4}) \in I_{4}$.
\par
We choose the element $(a_{1},b_{1})$ of $X$ that corresponds to $z_{(a_{1},b_{1})}$ with the smallest index. Then the derivatives with respect to $z_{(a_{1},b_{1})}$ of all summands of  \eqref{eq:(2)} corresponding to $\sigma \in S_{n}$ with $z_{\sigma(1)}\neq q_{1,2}z_{(a_{1},b_{1})}$ vanish at the point (one can use the product rule for each of the summands and see that all summands produced vanish). Thus, if none of $(a_{1}+1,b_{1})$, $(a_{1},b_{1}+1)$ belong to the set-theoretical difference of the chosen subset of the grid with $X$, the derivative with respect to $z_{i_{1}}$ vanishes. Otherwise, one (or both) summands $$-q_{1}(z_{(a_{1}+1,b_{1})}-q_{2}z_{(a_{1},b_{1})})(z_{(a_{1}+1,b_{1})}-\lambda)\prod\limits_{i\neq (a_{1}+1,b_{1})}(z_{(a_{1}+1,b_{1})}-q_{1}z_{i})(z_{(a_{1}+1,b_{1})}-q_{2}z_{i}),$$ $$-q_{2}(z_{(a_{1},b_{1}+1)}-q_{1}z_{(a_{1},b_{1})})(z_{(a_{1},b_{1}+1)}-\lambda)\prod\limits_{i\neq (a_{1}+1,b_{1})}(z_{(a_{1},b_{1}+1)}-q_{1}z_{i})(z_{(a_{1},b_{1}+1)}-q_{2}z_{i})$$ is (are) the only nonzero terms, when we evaluate the derivative of \eqref{eq:(2)} with respect to $z_{(a_{1},b_{1})}$ at the point corresponding to the vertices on the diagram. However, the same is true for the derivative(s) with respect to either of $z_{(a_{1}+1,b_{1})}$, $z_{(a_{1},b_{1}+1)}$ or both (the only nonzero terms being $(z_{(a_{1}+1,b_{1})}-q_{2}z_{(a_{1},b_{1})})(z_{(a_{1}+1,b_{1})}-\lambda)\prod\limits_{i\neq (a_{1}+1,b_{1})}(z_{(a_{1}+1,b_{1})}-q_{1}z_{i})(z_{(a_{1}+1,b_{1})}-q_{2}z_{i})$; $(z_{(a_{1},b_{1}+1)}-q_{1}z_{(a_{1},b_{1})})(z_{(a_{1},b_{1}+1)}-\lambda)\prod\limits_{i\neq (a_{1}+1,b_{1})}(z_{(a_{1},b_{1}+1)}-q_{1}z_{i})(z_{(a_{1},b_{1}+1)}-q_{2}z_{i})$) and adding $q_{1}$-multiple of the derivative with respect to $z_{(a_{1}+1,b_{1})}$ or $q_{2}$-multiple of the derivative with respect to $z_{(a_{1},b_{1}+1)}$ or both resolves the issue if there are no vertices one step below to the left or right of $(a_{1}+1,b_{1})$, $(a_{1},b_{1}+1)$, belonging to the set-theoretical difference of the chosen subset of the grid with $X$. In case there are such vertices we add derivatives with respect to those with corresponding coefficients. Similarly, if we have more than one linear relation between first-order derivatives at our point, the relations between second order derivatives are produced by taking 'mixed derivatives, as suggested by the first order relations', etc. So, the rank of the system of linear relations obtained is exactly $2^{k}$ (number of subsets of $X$). 
\end{proof}
\par
An example is provided below.
\par
\textbf{Example}. Let $F(z_{1},\hdots,z_{10}) \in I_{10}$, then $F(\lambda,q_{1}\lambda,q_{2}\lambda,\hdots, q_{1}^{3}q_{2}^{4}\lambda)=0$. There are two linear relations between first order derivatives: 
 \begin{flalign*}
&F'_{z_{4}}(\lambda,q_{1}\lambda,q_{2}\lambda,\hdots, q_{1}^{3}q_{2}^{4}\lambda)+q_{2}F'_{z_{5}}(\lambda,q_{1}\lambda,q_{2}\lambda,\hdots, q_{1}^{3}q_{2}^{4}\lambda)+\\
&+q_{1}F'_{z_{6}}(\lambda,q_{1}\lambda,q_{2}\lambda,\hdots, q_{1}^{3}q_{2}^{4}\lambda)+q_{2}F'_{z_{7}}(\lambda,q_{1}\lambda,q_{2}\lambda,\hdots, q_{1}^{3}q_{2}^{4}\lambda)=0 \mbox{ and }\\
&F'_{z_{8}}(\lambda,q_{1}\lambda,q_{2}\lambda,\hdots, q_{1}^{3}q_{2}^{4}\lambda)+q_{1}F'_{z_{9}}(\lambda,q_{1}\lambda,q_{2}\lambda,\hdots, q_{1}^{3}q_{2}^{4}\lambda)+q_{2}F'_{z_{10}}(\lambda,q_{1}\lambda,q_{2}\lambda,\hdots, q_{1}^{3}q_{2}^{4}\lambda)=0.
\end{flalign*}

\par
Finally, there is one linear relation between second order derivatives: 
 \begin{flalign*}
&F''_{z_{4}z_{8}}(\lambda,q_{1}\lambda,q_{2}\lambda,\hdots, q_{1}^{3}q_{2}^{4}\lambda)+q_{2}F''_{z_{5}z_{8}}(\lambda,q_{1}\lambda,q_{2}\lambda,\hdots, q_{1}^{3}q_{2}^{4}\lambda)+q_{1}F''_{z_{6}z_{8}}(\lambda,q_{1}\lambda,q_{2}\lambda,\hdots, q_{1}^{3}q_{2}^{4}\lambda)+\\
&+q_{2}F''_{z_{7}z_{8}}(\lambda,q_{1}\lambda,q_{2}\lambda,\hdots, q_{1}^{3}q_{2}^{4}\lambda)+q_{1}F''_{z_{4}z_{9}}(\lambda,q_{1}\lambda,q_{2}\lambda,\hdots, q_{1}^{3}q_{2}^{4}\lambda)+q_{1}q_{2}F''_{z_{5}z_{9}}(\lambda,q_{1}\lambda,q_{2}\lambda,\hdots, q_{1}^{3}q_{2}^{4}\lambda)+\\
&+q_{1}^{2}F''_{z_{6}z_{9}}(\lambda,q_{1}\lambda,q_{2}\lambda,\hdots, q_{1}^{3}q_{2}^{4}\lambda)+q_{1}q_{2}F''_{z_{7}z_{9}}(\lambda,q_{1}\lambda,q_{2}\lambda,\hdots, q_{1}^{3}q_{2}^{4}\lambda)+q_{2}F''_{z_{4}z_{10}}(\lambda,q_{1}\lambda,q_{2}\lambda,\hdots, q_{1}^{3}q_{2}^{4}\lambda)+\\
&+q_{2}^{2}F''_{z_{5}z_{10}}(\lambda,q_{1}\lambda,q_{2}\lambda,\hdots, q_{1}^{3}q_{2}^{4}\lambda)+q_{1}q_{2}F''_{z_{6}z_{10}}(\lambda,q_{1}\lambda,q_{2}\lambda,\hdots, q_{1}^{3}q_{2}^{4}\lambda)+q_{2}^{2}F''_{z_{7}z_{10}}(\lambda,q_{1}\lambda,q_{2}\lambda,\hdots, q_{1}^{3}q_{2}^{4}\lambda)\\
&=0. 
\end{flalign*}
\par
Thus we conclude that the point $(\lambda,q_{1}\lambda,q_{2}\lambda,\hdots, q_{1}^{3}q_{2}^{4}\lambda)$ must be taken with multiplicity four. This point corresponds to the 'green part' of the picture on Figure \ref{FigI10} above.

\par
\begin{cor}
\label{Cor2}
The dimension of $(S_{q_{1},q_{2}}/I)_{n}$ is equal to the number of  subsets of vertices on the grid of cardinality $n$ containing the root $(0,0)$, where each subset is counted with multiplicity determined in theorem \ref{Thm2}. Furthermore, the dimensions are bounded above by dim$(S_{q_{1},q_{2}}/I)_{n} \leq c_{n}$, where $c_{n}=\frac{{2n \choose n}}{n+1}$ is the $n$th Catalan number.
\end{cor}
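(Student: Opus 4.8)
\emph{The dimension formula.} The plan is to read the dimension of each graded piece off the local analysis performed in the proof of Theorem \ref{Thm2}. Using the identification $S_{n}\cong\mathbb{C}[z_{1}^{\pm 1},\dots,z_{n}^{\pm 1}]^{\mathfrak{S}_{n}}$ from Theorem \ref{Thm1}, the quantity $\dim_{\mathbb{C}}(S_{q_{1},q_{2}}/I)_{n}$ is the codimension of $I_{n}$ in $S_{n}$, i.e. the dimension of the space of linear functionals annihilating $I_{n}$, and the proof of Theorem \ref{Thm2} computes precisely this: a functional killing $I_{n}$ is a combination of the evaluation and iterated-derivative conditions at the points $P_{T}$ associated to size-$n$ subsets $T\ni(0,0)$ of the grid; the space of these conditions at a fixed $P_{T}$ has dimension $2^{k(T)}$; and $P_{T}$ contributes (this space is nonzero) exactly when $T$ supports a rooted binary tree. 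Every vertex of such a $T$ satisfies $a+b\leq n-1$ (its tree-distance from the root), so only finitely many $T$ occur, and the conditions at distinct $P_{T}$ are independent (the points are separated by symmetric Laurent polynomials); hence $\dim_{\mathbb{C}}(S_{q_{1},q_{2}}/I)_{n}=\sum_{T}2^{k(T)}$, with the convention $2^{k(T)}=0$ when $T$ supports no tree. By the tree-counting half of Theorem \ref{Thm2}, $2^{k(T)}$ is the number of rooted binary trees supported on $T$, giving the first assertion.

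\emph{The Catalan bound.} By the above, $\dim_{\mathbb{C}}(S_{q_{1},q_{2}}/I)_{n}$ equals the number of pairs $(T,\tau)$ with $T$ a size-$n$ subset of the grid containing $(0,0)$ and $\tau$ a rooted binary tree structure on $T$ whose edges join $(a,b)$ to $(a+1,b)$ (a left child) or to $(a,b+1)$ (a right child). I would then forget the embedding: such a pair is an abstract rooted binary tree on $n$ nodes (left and right children distinguished), and conversely each abstract rooted binary tree on $n$ nodes has a canonical realization in the grid (root at $(0,0)$; left child of a node at $(a,b)$ at $(a+1,b)$; right child at $(a,b+1)$), injective on nodes precisely when its image has $n$ vertices, in which case it returns a pair $(T,\tau)$, while $(T,\tau)$ in turn determines the abstract tree. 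Thus $\dim_{\mathbb{C}}(S_{q_{1},q_{2}}/I)_{n}$ is the number of abstract rooted binary trees on $n$ nodes with injective realization, a subset of the set of all $c_{n}=\binom{2n}{n}/(n+1)$ abstract rooted binary trees on $n$ nodes, so $\dim_{\mathbb{C}}(S_{q_{1},q_{2}}/I)_{n}\leq c_{n}$. The bound is sharp for $n\leq 4$ and strict for $n\geq 5$, the first collision occurring in the $5$-node tree whose root has both children, one of them carrying a right child and the other a left child, both of which land on $(1,1)$.

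\emph{Expected main obstacle.} The combinatorics of the second step is elementary; the delicate point is the first step. One must be sure that the rank computation in the proof of Theorem \ref{Thm2} is exhaustive, so that $I_{n}$ carries no vanishing condition beyond those listed there and the contribution at $P_{T}$ is exactly $2^{k(T)}$ rather than larger, and that these local contributions genuinely add up (finiteness of the relevant $T$, independence of the conditions at different $P_{T}$). Equivalently one needs that $I_{n}$ defines a zero-dimensional scheme of length $\sum_{T}2^{k(T)}$; granting this, the corollary follows formally from Theorem \ref{Thm2} and the tree bijection.
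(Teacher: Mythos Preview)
Your proposal is correct and matches the paper's intended argument. The paper gives no separate proof of the corollary: the dimension formula is meant to follow directly from the multiplicity computation in Theorem~\ref{Thm2} (reading $\dim(S_{q_1,q_2}/I)_n$ as the length of the zero-dimensional scheme cut out by $I_n$), and the Catalan bound is explained immediately after the corollary in exactly your terms---the dimension counts rooted binary trees embedded in the grid without collisions, hence is bounded by the total number $c_n$ of rooted binary trees, with strict inequality from $n=5$ on due to the collision tree you describe. Your flagged ``main obstacle'' (that the $2^{k(T)}$ local conditions are exhaustive and independent across points) is indeed the substantive content, and the paper absorbs it into Theorem~\ref{Thm2} rather than the corollary.
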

\par
The inequalities for dimensions above are strict for $n\geq 5$. To be more precise, the binary trees, which do not correspond to points, where all polynomials from $I_{n}$ vanish, are the trees, which have at least one pair of vertices with equal coordinates on the grid. The only such tree with $n=5$ vertices appears on Figure ~\ref{bad tree} and eight trees with $n=6$ vertices on Figure ~\ref{bad trees} below.

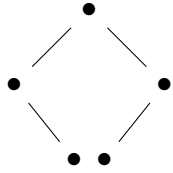
\begin{figure}[H]

\begin{center}
  \begin{tikzpicture}
    \node[] (1) at (0,0) {$\bullet$};
    \node[] (3) at (2,0) {$\bullet$};
    
    \node[] (12) at (1,1)  {$\bullet$};
    \node[] (4) at (0.8,-1)  {$\bullet$};
       \node[] (5) at (1.2,-1)  {$\bullet$};

      \path[]
    (1) edge node {} (12)
    (12) edge node {} (3)
    (3) edge node {} (5)
    (1) edge node {} (4);
 
\end{tikzpicture}
\end{center}
\caption{Forbidden Tree of Degree $5$}
\label{bad tree}
\end{figure}
\par
\begin{figure}[H]
\begin{tikzpicture}
   \node[] (-1) at (1,1) {$\bullet$};
    \node[] (1) at (1,-1) {$\bullet$};
    \node[] (3) at (3,-1) {$\bullet$};
     \node[] (12) at (2,0)  {$\bullet$};
    \node[] (4) at (1.8,-2)  {$\bullet$};
       \node[] (5) at (2.2,-2)  {$\bullet$};
\path[]
      (-1) edge node {} (12)
    (1) edge node {} (12)
    (12) edge node {} (3)
    (3) edge node {} (5)
    (1) edge node {} (4);
 \end{tikzpicture}
  \begin{tikzpicture}
   \node[] (-1) at (2,2) {$\bullet$};
    \node[] (1) at (0,0) {$\bullet$};
    \node[] (3) at (2,0) {$\bullet$};
    \node[] (12) at (1,1)  {$\bullet$};
    \node[] (4) at (0.8,-1)  {$\bullet$};
       \node[] (5) at (1.2,-1)  {$\bullet$};
\path[]
      (-1) edge node {} (12)
    (1) edge node {} (12)
    (12) edge node {} (3)
    (3) edge node {} (5)
    (1) edge node {} (4);
 \end{tikzpicture}
  \begin{tikzpicture}
   \node[] (-1) at (0,-1) {$\bullet$};
    \node[] (1) at (1,0) {$\bullet$};
    \node[] (3) at (3,0) {$\bullet$};
    \node[] (12) at (2,1)  {$\bullet$};
    \node[] (4) at (1.8,-1)  {$\bullet$};
       \node[] (5) at (2.2,-1)  {$\bullet$};

      \path[]
      (-1) edge node {} (1)
    (1) edge node {} (12)
    (12) edge node {} (3)
    (3) edge node {} (5)
    (1) edge node {} (4);
 
\end{tikzpicture}
  \begin{tikzpicture}
   \node[] (-1) at (4,0) {$\bullet$};
    \node[] (1) at (1,1) {$\bullet$};
    \node[] (3) at (3,1) {$\bullet$};
    
    \node[] (12) at (2,2)  {$\bullet$};
    \node[] (4) at (1.8,0)  {$\bullet$};
       \node[] (5) at (2.2,0)  {$\bullet$};

      \path[]
      (-1) edge node {} (3)
    (1) edge node {} (12)
    (12) edge node {} (3)
    (3) edge node {} (5)
    (1) edge node {} (4);
 \end{tikzpicture}
\end{figure}
\par

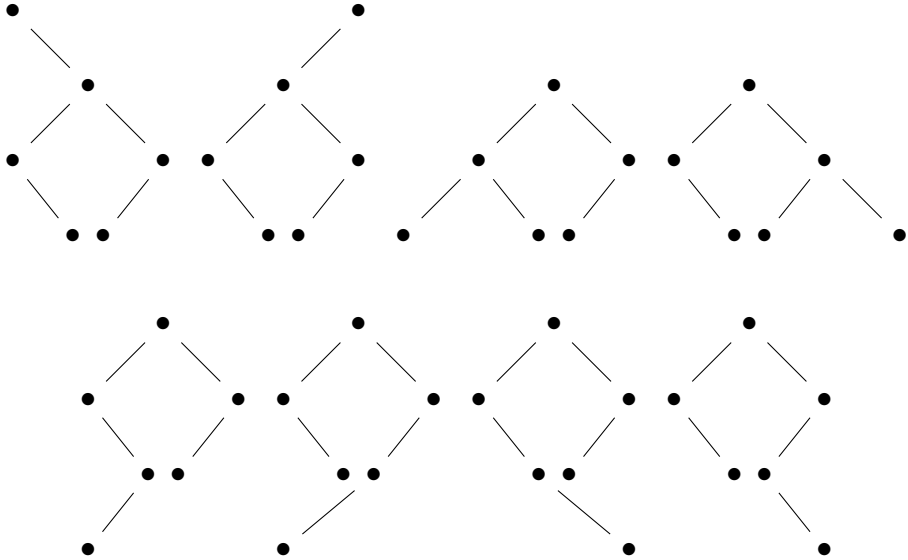
\begin{figure}[h]
  \begin{tikzpicture}
   \node[] (-1) at (1,-2) {$\bullet$};
    \node[] (1) at (1,0) {$\bullet$};
    \node[] (3) at (3,0) {$\bullet$};
    
    \node[] (12) at (2,1)  {$\bullet$};
    \node[] (4) at (1.8,-1)  {$\bullet$};
       \node[] (5) at (2.2,-1)  {$\bullet$};

      \path[]
      (-1) edge node {} (4)
    (1) edge node {} (12)
    (12) edge node {} (3)
    (3) edge node {} (5)
    (1) edge node {} (4);
 
\end{tikzpicture}
 \begin{tikzpicture}
   \node[] (-1) at (1,-2) {$\bullet$};
    \node[] (1) at (1,0) {$\bullet$};
    \node[] (3) at (3,0) {$\bullet$};
    
    \node[] (12) at (2,1)  {$\bullet$};
    \node[] (4) at (1.8,-1)  {$\bullet$};
       \node[] (5) at (2.2,-1)  {$\bullet$};
        \path[]
      (-1) edge node {} (5)
    (1) edge node {} (12)
    (12) edge node {} (3)
    (3) edge node {} (5)
    (1) edge node {} (4);
\end{tikzpicture}
 \begin{tikzpicture}
   \node[] (-1) at (3,-2) {$\bullet$};
    \node[] (1) at (1,0) {$\bullet$};
    \node[] (3) at (3,0) {$\bullet$};
    
    \node[] (12) at (2,1)  {$\bullet$};
    \node[] (4) at (1.8,-1)  {$\bullet$};
       \node[] (5) at (2.2,-1)  {$\bullet$};

      \path[]
      (-1) edge node {} (4)
    (1) edge node {} (12)
    (12) edge node {} (3)
    (3) edge node {} (5)
    (1) edge node {} (4);
 \end{tikzpicture}
 \begin{tikzpicture}
   \node[] (-1) at (3,-2) {$\bullet$};
    \node[] (1) at (1,0) {$\bullet$};
    \node[] (3) at (3,0) {$\bullet$};
    
    \node[] (12) at (2,1)  {$\bullet$};
    \node[] (4) at (1.8,-1)  {$\bullet$};
       \node[] (5) at (2.2,-1)  {$\bullet$};

      \path[]
      (-1) edge node {} (5)
    (1) edge node {} (12)
    (12) edge node {} (3)
    (3) edge node {} (5)
    (1) edge node {} (4);
 \end{tikzpicture}
\caption{$8$ Forbidden Trees of Degree $6$}
\label{bad trees}
\end{figure}
\subsection{Nongeneric parameters $(q_{1}^{a}q_{2}^{b}=1)$}
Here we assume that $q_{1}$ and $q_{2}$ are not generic, i.e. there exist $a,b \in \mathbb{N}$, s.t. $q_{1}^{a}q_{2}^{b}=1$. One of the significant changes is that $S_{1}$ does not generate $S_{q_{1},q_{2}}$ anymore. For example, if $a=2$, $b=1$, then every element of $S_{1}*S_{1}*S_{1}$ is zero at any point with coordinates $(\alpha,q_{1}\alpha,q_{1}q_{2}\alpha) ~~\forall \alpha \in \mathbb{C}^{*}$. 
\par
Therefore, counting the dimensions of $(S/I)_{n}$ does not make sense - they will become infinite, starting with $n=a+b$. The reasonable modification to what we have done so far is to work with the subalgebra of $S_{q_{1},q_{2}}$ generated by $S_{1}$, which will be denoted by $\widetilde{S}_{q_{1},q_{2}}$, consider the right ideal generated by $(z-\lambda)$ and calculate the dimensions of $(\widetilde{S}_{q_{1},q_{2}}/I)_{n}$. 
\begin{rmk}
In case $q_{1}$ and $q_{2}$ were generic $\widetilde{S}_{q_{1},q_{2}}$ and $S_{q_{1},q_{2}}$ were isomorphic, so there was no difference in working with $(S/I)_{n}$ or $(\widetilde{S}_{q_{1},q_{2}}/I)_{n}$.
\end{rmk}

\par
\begin{thm}
\label{Thm3}
Assume a subset of vertices on the grid is chosen, $(0,0)$ belongs to the subset and none of the points in the region $R:=\{(x,y)|x\geq a, y \geq b\}$ are included in the subset. The number of binary trees supported on the vertices from the subset is either zero or $2^{k}$, where $k$ is the number of vertices $(c; d)$ with neighbors
$(c-1; d)$ and $(c; d-1)$ also included in the subset (we denote this set of
vertices by $X$). The multiplicity of the point associated to the subset is also $2^{k}$. 
\end{thm}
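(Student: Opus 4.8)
The plan is to follow the template of the proof of Theorem \ref{Thm2}, replacing the use of genericity of $q_{1},q_{2}$ by the hypothesis that the chosen subset avoids $R$. The combinatorial assertion about trees is unchanged: building a rooted binary tree on the chosen vertices row by row from the bottom, the only freedom is the independent binary choice at each vertex of $X$ (which of its two lower neighbours is its parent), so the number of trees is either $0$ (if the subset is not reachable from the root in this way) or $2^{k}$.

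For the zero locus I would again write an arbitrary $G$ in the degree $n$ part of the ideal as $(z-\lambda)*F$ with $F$ in the degree $n-1$ part of $\widetilde{S}_{q_{1},q_{2}}$, so that $G=\mathrm{Alt}_{S_{n}}\!\big(F(z_{2},\dots,z_{n})(z_{1}-\lambda)\prod_{i\geq 2}\mu(z_{1},z_{i})\big)$, and, letting $F$ range over the degree $n-1$ component, reduce the vanishing of all of $I_{n}$ at a point to the vanishing of every summand of \eqref{eq:(2)}. As in Theorem \ref{Thm2}, the summand indexed by $\sigma$ vanishes iff $z_{\sigma(1)}=\lambda$ or $z_{\sigma(1)}=q_{1,2}z_{i}$ for some $i$, so every index must satisfy one of these; I would build, exactly as there, a forest on $\{1,\dots,n\}$ rooted at the unique index with $z=\lambda$, which is the tree attached to the subset, and read off that the point associated to the subset is a common zero. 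The new feature is that, because $q_{1}^{a}q_{2}^{b}=1$, the relations among the coordinates can now contain cycles $z_{i_{1}}=q_{1,2}z_{i_{2}}=\dots=q_{1,2}z_{i_{1}}$ in addition to the tree edges, so one does not conclude, as in the generic case, that such cycles are impossible; instead one must check that the geometry supplied by the hypothesis on $R$ still lets one extract the tree, and in particular that the coordinates attached to a subset disjoint from $R$ are pairwise distinct. The last point is a finite check: a coincidence $q_{1}^{x-x'}q_{2}^{y-y'}=1$ is an element of the relation lattice, which (after first normalizing $(a,b)$ to generate it) is a nonzero multiple $t(a,b)$, and $t\geq 1$ puts $(x,y)$ into $R$ while $t\leq -1$ puts $(x',y')$ into $R$.

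The remaining, and I expect hardest, part is the multiplicity computation and showing it equals $2^{k}$. I would reproduce the derivative analysis of Theorem \ref{Thm2}: pick the vertex of $X$ with smallest index, note that differentiating \eqref{eq:(2)} in its variable annihilates every summand whose distinguished variable is not $q_{1,2}$ times it, correct by adding the $q_{1}$-multiple of the derivative in the down-left neighbour's variable and the $q_{2}$-multiple of the derivative in the down-right neighbour's variable (and recursively in their $X$-descendants), obtaining one independent first-order linear relation per vertex of $X$, with the prescribed mixed derivatives furnishing the higher-order relations and no further relations occurring, so that the system has rank $2^{k}$ and the point is counted with multiplicity $2^{k}$. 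The genuine obstacle, absent in Theorem \ref{Thm2}, is that $q_{1}^{a}q_{2}^{b}=1$ can make certain factors $z_{v}-q_{1,2}z_{w}$ or $z_{v}-\lambda$ occurring in \eqref{eq:(2)} vanish at the point for a ``wrap-around'' reason unrelated to any $X$-vertex (for instance, if the subset contains $(0,0)$ and $(a-1,b)$ then $z_{(0,0)}-q_{1}z_{(a-1,b)}=\lambda(1-q_{1}^{a}q_{2}^{b})=0$), and one must verify that each such accidental vanishing is absorbed by the vanishing of the summand itself and never manufactures an extra linear dependence among the derivatives — equivalently, that the whole graded component $\widetilde{S}_{q_{1},q_{2}}$ does not degenerate at the point — so that the rank of the derivative system stays governed solely by $X$. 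An alternative that packages this cleanly is to bound $\dim(\widetilde{S}_{q_{1},q_{2}}/I)_{n}$ above by reducing every product $z^{i_{1}}*\dots*z^{i_{n}}$ to the normal form \eqref{monomials} modulo \eqref{eq:(100)} (and whatever extra relations the nongeneric regime imposes) and below by exhibiting, for each admissible subset, the $2^{k}$ evaluation-and-derivative functionals killing $I_{n}$, and then to match the two bounds.
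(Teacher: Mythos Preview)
Your plan is exactly the paper's: its entire proof says to argue as in Theorem~\ref{Thm2}, the sole difference being that a subset meeting $R$ gives a point with repeated coordinates, where every element of $(\widetilde{S}_{q_{1},q_{2}})_{n}$ has a pole, so such points are excluded. Your relation-lattice check that $R$-avoidance forces pairwise-distinct coordinates, and your flagged concern about ``wrap-around'' zeros of $\mu$-factors in the derivative count, go beyond anything the paper writes; the paper does not raise or address the latter issue, so if it genuinely needs care it is a gap the published proof also leaves.
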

\begin{figure}[H]
\begin {center}
 \begin{tikzpicture}
    \node[] (1) at (0,0) {$\bullet$};
    \node[] (2) at (-0.5,-0.5)  {$\bullet$};
     \node[] (2) at (-1,-1)  {$\bullet$};
      \node[] (2) at (1, -1)  {$\bullet$};
    \node[] (3) at (0.5,-0.5) {$\bullet$};
     \node[] (4) at (0,-1) {$\bullet$};
       \node[] (2) at (-1.5,-1.5)  {$\bullet$};
      \node[] (2) at (-0.5, -1.5)  {$\bullet$};
    \node[] (3) at (0.5,-1.5) {$\red{(a,b)}$};
    \node[] (3) at (1.5,-1.5) {$\bullet$};
    
       \node[] (2) at (-2,-2)  {$\bullet$};
      \node[] (2) at (-1, -2)  {$\bullet$};
    \node[] (3) at (0,-2) {$\red{\bullet}$};
    \node[] (3) at (1,-2) {$\red{\bullet}$};
    \node[] (3) at (2,-2) {$\bullet$};
    
       \node[] (2) at (-2.5,-2.5)  {$\bullet$};
      \node[] (2) at (-1.5, -2.5)  {$\bullet$};
    \node[] (3) at (0.5,-2.5) {$\red{\bullet}$};
    \node[] (3) at (1.5,-2.5) {$\red{\bullet}$};
    \node[] (3) at (2.5,-2.5) {$\bullet$};
    \node[] (3) at (-0.5,-2.5) {$\red{\bullet}$};
    
      \node[] (2) at (0,-3)  {$\hdots$};
      \end{tikzpicture}
      \end{center}
    \caption{Restriction on subset of vertices}
    \label{example}
\end{figure}
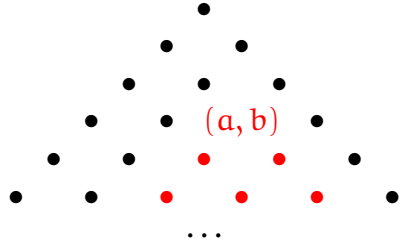
\par
\begin{proof}
One can argue analogously to the proof of theorem \ref{Thm2}, the only difference being that any point associated to a subset of the grid, with nonempty intersection with $R$ (the region depicted in red on the picture above) has equal coordinates, thus, can not be taken into consideration, since any function from $(\widetilde{S}_{q_{1},q_{2}})_{n}$ has a pole at such point.  
\end{proof}

\par
\begin{cor}
\label{Cor3}
The dimension of $(\widetilde{S}_{q_{1},q_{2}}/I)_{n}$ is equal to the number of  subsets of vertices on the grid of cardinality $n$ containing the root $(0,0)$ and no points from $R$, where each subset is counted with multiplicity determined in theorem \ref{Thm3}.
\end{cor}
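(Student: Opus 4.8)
The plan is to obtain Corollary \ref{Cor3} from Theorem \ref{Thm3} exactly as Corollary \ref{Cor2} is obtained from Theorem \ref{Thm2}, tracking the two features peculiar to the resonant regime: $\widetilde{S}_{q_{1},q_{2}}$ is now a \emph{proper} subalgebra of the full shuffle algebra — so $(\widetilde{S}_{q_{1},q_{2}})_{n}$ consists of the symmetric Laurent numerators that in addition vanish on the resonant locus — and the subsets meeting $R$ have to be discarded. Write $d_{n}$ for the asserted count: the sum of $2^{k(S)}$ over $n$-element subsets $S$ of the grid with $(0,0)\in S$ and $S\cap R=\varnothing$, which by the tree-counting half of Theorem \ref{Thm3} also equals the number of rooted binary trees with $n$ vertices that can be drawn on the grid with pairwise distinct vertex-coordinates and no vertex in $R$. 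The goal is to trap $\dim(\widetilde{S}_{q_{1},q_{2}}/I)_{n}$ between $d_{n}$, coming from a spanning set, and $d_{n}$, coming from a family of linearly independent functionals.

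\emph{Upper bound.} Since $\widetilde{S}_{q_{1},q_{2}}$ is generated by $S_{1}$, the space $(\widetilde{S}_{q_{1},q_{2}})_{n}$ is spanned by the shuffle monomials $z^{i_{1}}*\cdots*z^{i_{n}}$; the quadratic relations \eqref{eq:(100)}, still valid at the resonant parameters, reduce these (as in the proof of Theorem \ref{Thm1}) to the ones with $i_{m+1}\le i_{m}+1$, and a further reduction modulo $I_{n}=(z-\lambda)*(\widetilde{S}_{q_{1},q_{2}})_{n-1}$ — rewriting a leftmost factor by means of $z-\lambda$ — brings every monomial to a normal form indexed by a binary tree drawn on the grid. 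Following the normal-form bookkeeping of Corollary \ref{Cor2}, I would show that a monomial whose tree has a repeated vertex is a linear combination of the remaining ones, and — the genuinely new point — that a monomial whose tree reaches $R$ already lies in $I_{n}$: by the proof of Theorem \ref{Thm3} the coordinate configuration attached to such a tree lies in the common zero locus of $(\widetilde{S}_{q_{1},q_{2}})_{n}$ (the non-generic analogue of the collapse exhibited in the example preceding that theorem), so the monomial, being such a function, is automatically in $I_{n}$. Hence $\dim(\widetilde{S}_{q_{1},q_{2}}/I)_{n}\le d_{n}$.

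\emph{Lower bound.} For each admissible $S$ and each subset of the associated vertex-set $X$, Theorem \ref{Thm3} supplies an explicit linear functional on $(\widetilde{S}_{q_{1},q_{2}})_{n}$ — evaluation at the point $p_{S}$ attached to $S$, or one of the prescribed linear combinations of higher partial derivatives at $p_{S}$ — which annihilates $I_{n}$. Assembling these over all admissible $S$ yields a linear map $\mathrm{ev}\colon(\widetilde{S}_{q_{1},q_{2}})_{n}\to\bigoplus_{S}\mathbb{C}^{2^{k(S)}}$ that factors through $(\widetilde{S}_{q_{1},q_{2}}/I)_{n}$, and it is enough to prove $\mathrm{ev}$ surjective. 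I would test this on the functions $\delta_{\sigma}=f_{1}^{\sigma}*\cdots*f_{n}^{\sigma}$ built in the proof of Theorem \ref{Thm1}, with $f_{i}^{\sigma}(z)=\prod_{j\ne\sigma(i)}(z-\beta_{j})$ and $\{\beta_{1},\dots,\beta_{n}\}$ the coordinates of a chosen admissible subset: these lie in $\widetilde{S}_{q_{1},q_{2}}$ by construction, and the identity $\delta_{\sigma}(\beta_{1},\dots,\beta_{n})=c_{\sigma}\prod_{i<j}\mu(\beta_{\sigma(i)},\beta_{\sigma(j)})$, together with its differentiated versions at the vertices of $X$ — where the two legitimate choices of parent produce the $2^{k(S)}$ independent jet-directions — makes the matrix of $\mathrm{ev}$ on a suitably ordered family of $\delta_{\sigma}$'s triangular and non-degenerate, hence of rank $d_{n}$. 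Subsets meeting $R$ contribute nothing, every element of $(\widetilde{S}_{q_{1},q_{2}})_{n}$ having a pole at the corresponding configuration (the coordinates coincide there), as recorded in the proof of Theorem \ref{Thm3}. With the upper bound this forces $\dim(\widetilde{S}_{q_{1},q_{2}}/I)_{n}=d_{n}$.

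The main obstacle, exactly as in Corollary \ref{Cor2}, is making the two estimates meet: verifying that the normal-form monomials number precisely $d_{n}$ — that nothing collapses further and that no forbidden or $R$-reaching tree survives — while at the same time verifying the linear independence of the $\mathrm{ev}$-functionals. The additional difficulty here is that the parameter specialization $q_{1}=0,\ q_{2}=1$ used in Theorem \ref{Thm1} to separate monomials is unavailable (the parameters are fixed and resonant), so the independence must be read off directly from the functions $\delta_{\sigma}$ and from the linear independence of monomial symmetric Laurent polynomials; and one must check that restricting the evaluation and jet functionals from the full shuffle algebra to the proper subalgebra $\widetilde{S}_{q_{1},q_{2}}$ creates no new relations among them at the admissible points — which is precisely what the hypothesis $S\cap R=\varnothing$ is there to guarantee.
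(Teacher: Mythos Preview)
The paper supplies no proof of Corollary~\ref{Cor3} (nor of Corollary~\ref{Cor2}): both are stated bare, as immediate consequences of the preceding theorems. The implicit argument is the commutative-algebra one --- the numerators of $I_n$ form an ideal in the ring of symmetric Laurent polynomials (first step of the proof of Theorem~\ref{Thm1}), so $\dim(S_n/I_n)$ is the length of the zero scheme, which Theorem~\ref{Thm2} (resp.\ \ref{Thm3}) has just computed. In the generic case this is clean because $S_n$ is the full ring; in the resonant case $\widetilde{S}_n$ is only a proper ideal therein, and one really needs that $\widetilde{S}_n/I_n$, regarded as an $S_n$-module, is supported exactly on the admissible configurations with the stated local lengths --- a point the paper does not address. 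So your instinct that something extra is needed here is correct; the paper simply does not supply it.

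Your route is different and much more explicit, and the lower-bound half --- pairing the $\delta_\sigma$'s against the evaluation and jet functionals --- is in the spirit of Theorem~\ref{Thm1} and looks workable. The upper-bound half, however, has a genuine gap. You claim that a normal-form monomial whose tree reaches $R$ already lies in $I_n$, on the grounds that ``the coordinate configuration attached to such a tree lies in the common zero locus of $(\widetilde{S}_{q_1,q_2})_n$''. But this is a property of \emph{every} element of $\widetilde{S}_n$, not a property that distinguishes $I_n$ inside it; vanishing (or, per the proof of Theorem~\ref{Thm3}, having a pole) at the $R$-configurations gives no reason whatsoever for membership in $I_n$. The inference ``the monomial, being such a function, is automatically in $I_n$'' is a non sequitur. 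Separately, the ``normal-form bookkeeping of Corollary~\ref{Cor2}'' you invoke does not exist: Corollary~\ref{Cor2} carries no proof in the paper, and nowhere does the paper produce a spanning set of $(S/I)_n$ indexed by binary trees on the grid --- the only monomial-to-lattice-path bijection in the paper is in Section~4, for a different representation. So the entire upper bound rests on steps that are neither borrowed from the paper nor established here, and at least one of them is false as stated.
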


\section{Hilbert series of a representation.}
The result of this part is very similar to the one established in Theorem $4$ in the Appendix of \cite{Dots}. However, for the case relevant to us, the machinery used in \cite{Dots} is not required, and a more elementary purely combinatorial approach is sufficient to provide a proof of the result.  Throughout this section we assume that the parameters $q_{1}$ and $q_{2}$ are generic and commute. Let $S_{q_{1},q_{2}}^{+}$ denote the subalgebra generated by $z^{i}, i\geq0$ and $S_{1}^{+}$ its first graded
component. We introduce the representation $V := \mbox{Ind}_{S_{1}^{+}}^{S_{q_{1},q_{2}}^{+}}\mathbb{C}_{1}$ of $S_{q_{1},q_{2}}^{+}$, where $\mathbb{C}_{1}$ is the one dimensional representation of $S_{1}^{+}$ with $z^{i>0}v=0$ and $z^{0}v=v$ for $v$ that spans $\mathbb{C}_{1}$.
\par
\begin{thm}
\label{Thm6} The dimension of $(S^{+}_{q_{1},q_{2}})_{n}v$ is equal to the number of Dyck paths of length $2n$, which is known to be the corresponding Catalan number. 
\end{thm}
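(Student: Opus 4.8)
The plan is to read off $\dim (S^{+}_{q_{1},q_{2}})_{n}v$ from a monomial basis of $S^{+}_{q_{1},q_{2}}$, but using the normal form \emph{opposite} to the one in Theorem~\ref{Thm1}. First I would record that the \emph{anti-straightened} monomials $z^{a_{1}}*\cdots*z^{a_{n}}$ with $a_{j}\ge 0$ and $a_{j}\le a_{j+1}+1$ for all $j\le n-1$ form a $\mathbb{C}(q_{1},q_{2})$-basis of $(S^{+}_{q_{1},q_{2}})_{n}$: the relations \eqref{eq:(100)} rewrite $z^{a_{j}}*z^{a_{j+1}}$ whenever $a_{j}\ge a_{j+1}+2$ just as well as whenever $a_{j+1}\ge a_{j}+2$, so the argument of Theorem~\ref{Thm1} shows these monomials span, while the reversal $a\mapsto(a_{n},\dots,a_{1})$ is a degree-preserving bijection between straightened and anti-straightened sequences, so their number equals $\dim (S^{+}_{q_{1},q_{2}})_{n}$ and they are a basis. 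I keep the second grading by $d=a_{1}+\cdots+a_{n}$.

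Now, by the definition of the induced module, $(S^{+}_{q_{1},q_{2}})_{n}v$ is the quotient $(S^{+}_{q_{1},q_{2}})_{n}/L_{n}$, where $L_{n}=\sum_{i\ge 1}(S^{+}_{q_{1},q_{2}})_{n-1}*z^{i}$ is the span of the relations $z^{i}v=0$. The key step is to identify $L_{n}$ with the span of exactly those anti-straightened basis monomials whose \emph{last} exponent is $\ge 1$. The inclusion $\supseteq$ is immediate, since $z^{a_{1}}*\cdots*z^{a_{n}}=(z^{a_{1}}*\cdots*z^{a_{n-1}})*z^{a_{n}}\in L_{n}$ whenever $a_{n}\ge1$. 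For $\subseteq$, I take a generator $z^{c_{1}}*\cdots*z^{c_{n-1}}*z^{i}$ with $(c_{1},\dots,c_{n-1})$ anti-straightened and $i\ge1$, and bring it to the anti-straightened basis. Whenever a rewrite of the last pair $z^{c_{n-1}}*z^{i}$ is required (that is, when $c_{n-1}\ge i+2$), \eqref{eq:(100)} — and its degenerate form for $c_{n-1}=i+2$ — replaces it by a combination of monomials $z^{p}*z^{r}$ with $r\in\{i+1,\ i+2,\ c_{n-1}-2,\ c_{n-1}-1,\ c_{n-1}\}$, every one of which is $\ge1$ because $i\ge1$ and $c_{n-1}\ge i+2$; all further rewriting happens at positions $<n$ and leaves the last exponent untouched. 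Hence every monomial produced is anti-straightened with last exponent $\ge1$, which gives $\subseteq$.

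It follows that $(S^{+}_{q_{1},q_{2}})_{n}/L_{n}$ — being the quotient of a vector space by the span of part of a basis — is freely spanned by the images of the anti-straightened monomials with $a_{n}=0$. Writing $g_{i}:=a_{n+1-i}$, these correspond bijectively to the sequences $(g_{1},\dots,g_{n})$ with $g_{1}=0$, $g_{i}\ge0$, $g_{i+1}\le g_{i}+1$ — precisely the area sequences of Dyck paths of semilength $n$, of which there are $c_{n}$; moreover $d$ equals the area $g_{1}+\cdots+g_{n}$, so $\sum_{d}\dim (S^{+}_{q_{1},q_{2}})_{n,d}\,q^{d}$ is the Carlitz--Riordan $q$-Catalan polynomial. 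This proves $\dim (S^{+}_{q_{1},q_{2}})_{n}v=c_{n}$, the number of Dyck paths of length $2n$. The one place that needs genuine care is the identification of $L_{n}$ in the second paragraph: one must run through the explicit forms of \eqref{eq:(100)} (including the case $c_{n-1}=i+2$) to be certain that rewriting a generator of $L_{n}$ never produces a monomial with vanishing last exponent — this is exactly the step that would fail for the straightened basis of Theorem~\ref{Thm1}, where appending $z^{i}$ and straightening can create terms ending in $z^{0}$ and thereby force extra relations into the quotient.
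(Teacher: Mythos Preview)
Your argument is the mirror image of the paper's: where the paper works with the straightened basis of Theorem~\ref{Thm1} and the condition $i_{1}=0$ on the \emph{first} exponent, you pass to the reversed (anti-straightened) basis and impose $a_{n}=0$ on the \emph{last} exponent, then match with Dyck paths via the area sequence. The substance is the same, and in fact you justify the quotient step more carefully than the paper does.

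One sentence needs repair. It is not true that after a single rewrite of $z^{c_{n-1}}*z^{i}$ ``all further rewriting happens at positions $<n$'': for $c_{n-1}-i\ge 4$ the term $z^{c_{n-1}-1}*z^{i+1}$ still has first-minus-second difference $\ge 2$ and must be rewritten again at $(n-1,n)$, and a later rewrite at $(n-2,n-1)$ can raise the $(n{-}1)$-th exponent and force yet another rewrite there. The clean argument is the invariant you already isolated: ``last exponent $\ge 1$'' is preserved by \emph{every} rewrite --- at position $(n-1,n)$ by your list $r\in\{i+1,i+2,c_{n-1}-2,c_{n-1}-1,c_{n-1}\}$, and trivially at positions $<n$ --- so when the process terminates all anti-straightened monomials produced have $a_{n}\ge 1$. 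With that adjustment your identification of $L_{n}$, and hence the whole proof, is correct.
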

\begin{proof}
Here it is convenient to use the presentation of $S_{q_{1},q_{2}}$ in terms of generators and relations (see theorem \ref{Thm1}). This presentation allows to choose a basis of $(S^{+}_{q_{1},q_{2}})_{n}$, consisting of $z^{i_{1}} \hdots z^{i_{n}}$, with  $i_{m+1} \leq  i_{m}+1 ~~\forall m \in \{1,\hdots,n-1\}, i_{j}\geq 0 ~~\forall j \in \{1,\hdots,n\}$. The monomials that act nontrivially on $v$ have an additional condition that $i_{1}=0$. Now to the vector $vz^{i_{1}} \hdots z^{i_{n}}$ with  $z^{i_{1}} \hdots z^{i_{n}}$ as above we associate the Dyck path according to the following three-step procedure, (it is not hard to see, that this is invertible, hence, gives a bijection): 
\begin{enumerate}
	\item subdivide the monomial into monomials with strictly increasing indices;
	\item the starting point for the part of the path corresponding to each monomial obtained on the previous step is the
	terminal point for the part of the path associated to the previous monomial, the path goes the degree of the monomial blocks up and then down until the $y$-coordinate is equal to the index of the first
	variable in the next monomial;
	\item if the $y$-coordinate of the terminal point of the path is greater than $0$, complete the path by adding $y$ steps down.
\end{enumerate}
\end{proof}
\par

\textbf{Example}. The pictures below  illustrate the correspondence for $n=3$.
\begin{figure}[H]
\label{Fig5}
\begin{center}
  \begin{tikzpicture}
    \node[] (1) at (0,0) {$\bullet$};
    \node[] (2) at (1,0)  {$\bullet$};
    \node[] (3) at (2,0) {$\bullet$};
       \node[] (4) at (3,0)  {$\bullet$};
    \node[] (5) at (4,0) {$\bullet$};
       \node[] (6) at (5,0)  {$\bullet$};
    \node[] (7) at (6,0) {$\bullet$};
    
        \node[] (11) at (0,1) {$\bullet$};
    \node[] (12) at (1,1)  {$\bullet$};
    \node[] (13) at (2,1) {$\bullet$};
       \node[] (14) at (3,1)  {$\bullet$};
    \node[] (15) at (4,1) {$\bullet$};
       \node[] (16) at (5,1)  {$\bullet$};
    \node[] (17) at (6,1) {$\bullet$};

      \path[]
    (1) edge node {} (12)
    (12) edge node {} (3)
      (3) edge node {} (14)
    (14) edge node {} (5)
      (5) edge node {} (16)
    (16) edge node {} (7) ;
 
\end{tikzpicture}
\end{center}
\caption{$z^{0}\red{|}$ $z^{0}\red{|}$ $z^{0}$}
\end{figure}

\begin{figure}[H]
\begin{center}
  \begin{tikzpicture}
    \node[] (1) at (0,0) {$\bullet$};
    \node[] (2) at (1,0)  {$\bullet$};
    \node[] (3) at (2,0) {$\bullet$};
       \node[] (4) at (3,0)  {$\bullet$};
    \node[] (5) at (4,0) {$\bullet$};
       \node[] (6) at (5,0)  {$\bullet$};
    \node[] (7) at (6,0) {$\bullet$};
    
        \node[] (11) at (0,1) {$\bullet$};
    \node[] (12) at (1,1)  {$\bullet$};
    \node[] (13) at (2,1) {$\bullet$};
       \node[] (14) at (3,1)  {$\bullet$};
    \node[] (15) at (4,1) {$\bullet$};
       \node[] (16) at (5,1)  {$\bullet$};
    \node[] (17) at (6,1) {$\bullet$};
    
        \node[] (21) at (0,2) {$\bullet$};
    \node[] (22) at (1,2)  {$\bullet$};
    \node[] (23) at (2,2) {$\bullet$};
       \node[] (24) at (3,2)  {$\bullet$};
    \node[] (25) at (4,2) {$\bullet$};
       \node[] (26) at (5,2)  {$\bullet$};
    \node[] (27) at (6,2) {$\bullet$};
    
    \path[]
    (1) edge node {} (12)
    (12) edge node {} (3)
      (3) edge node {} (14)
    (14) edge node {} (25)
      (25) edge node {} (16)
    (16) edge node {} (7) ;
  
\end{tikzpicture}
\end{center}
   \caption{$z^{0}\red{|}$ $z^{0}z^{1}$}
\end{figure}

\begin{figure}[H]
\begin{center}
  \begin{tikzpicture}
    \node[] (1) at (0,0) {$\bullet$};
    \node[] (2) at (1,0)  {$\bullet$};
    \node[] (3) at (2,0) {$\bullet$};
       \node[] (4) at (3,0)  {$\bullet$};
    \node[] (5) at (4,0) {$\bullet$};
       \node[] (6) at (5,0)  {$\bullet$};
    \node[] (7) at (6,0) {$\bullet$};
    
        \node[] (11) at (0,1) {$\bullet$};
    \node[] (12) at (1,1)  {$\bullet$};
    \node[] (13) at (2,1) {$\bullet$};
       \node[] (14) at (3,1)  {$\bullet$};
    \node[] (15) at (4,1) {$\bullet$};
       \node[] (16) at (5,1)  {$\bullet$};
    \node[] (17) at (6,1) {$\bullet$};
    
        \node[] (21) at (0,2) {$\bullet$};
    \node[] (22) at (1,2)  {$\bullet$};
    \node[] (23) at (2,2) {$\bullet$};
       \node[] (24) at (3,2)  {$\bullet$};
    \node[] (25) at (4,2) {$\bullet$};
       \node[] (26) at (5,2)  {$\bullet$};
    \node[] (27) at (6,2) {$\bullet$};
    
    \path[]
    (1) edge node {} (12)
    (12) edge node {} (23)
      (23) edge node {} (14)
    (14) edge node {} (5)
      (5) edge node {} (16)
    (16) edge node {} (7) ;
    
\end{tikzpicture}
\end{center}
 \caption{$z^{0}z^{1}\red{|}$ $z^{0}$}
 
\end{figure}
\begin{figure}[H]
\begin{center}
  \begin{tikzpicture}
    \node[] (1) at (0,0) {$\bullet$};
    \node[] (2) at (1,0)  {$\bullet$};
    \node[] (3) at (2,0) {$\bullet$};
       \node[] (4) at (3,0)  {$\bullet$};
    \node[] (5) at (4,0) {$\bullet$};
       \node[] (6) at (5,0)  {$\bullet$};
    \node[] (7) at (6,0) {$\bullet$};
    
        \node[] (11) at (0,1) {$\bullet$};
    \node[] (12) at (1,1)  {$\bullet$};
    \node[] (13) at (2,1) {$\bullet$};
       \node[] (14) at (3,1)  {$\bullet$};
    \node[] (15) at (4,1) {$\bullet$};
       \node[] (16) at (5,1)  {$\bullet$};
    \node[] (17) at (6,1) {$\bullet$};
    
        \node[] (21) at (0,2) {$\bullet$};
    \node[] (22) at (1,2)  {$\bullet$};
    \node[] (23) at (2,2) {$\bullet$};
       \node[] (24) at (3,2)  {$\bullet$};
    \node[] (25) at (4,2) {$\bullet$};
       \node[] (26) at (5,2)  {$\bullet$};
    \node[] (27) at (6,2) {$\bullet$};
    
    \path[]
    (1) edge node {} (12)
    (12) edge node {} (23)
      (23) edge node {} (14)
    (14) edge node {} (25)
      (25) edge node {} (16)
    (16) edge node {} (7) ;
\end{tikzpicture}
\caption{$z^{0}z^{1}\red{|}$ $z^{1}$}
\end{center}
\end{figure}

\begin{figure}[H]
	\label{Fig9}
\begin{center}
  \begin{tikzpicture}
    \node[] (1) at (0,0) {$\bullet$};
    \node[] (2) at (1,0)  {$\bullet$};
    \node[] (3) at (2,0) {$\bullet$};
       \node[] (4) at (3,0)  {$\bullet$};
    \node[] (5) at (4,0) {$\bullet$};
       \node[] (6) at (5,0)  {$\bullet$};
    \node[] (7) at (6,0) {$\bullet$};
    
        \node[] (11) at (0,1) {$\bullet$};
    \node[] (12) at (1,1)  {$\bullet$};
    \node[] (13) at (2,1) {$\bullet$};
       \node[] (14) at (3,1)  {$\bullet$};
    \node[] (15) at (4,1) {$\bullet$};
       \node[] (16) at (5,1)  {$\bullet$};
    \node[] (17) at (6,1) {$\bullet$};
    
        \node[] (21) at (0,2) {$\bullet$};
    \node[] (22) at (1,2)  {$\bullet$};
    \node[] (23) at (2,2) {$\bullet$};
       \node[] (24) at (3,2)  {$\bullet$};
    \node[] (25) at (4,2) {$\bullet$};
       \node[] (26) at (5,2)  {$\bullet$};
    \node[] (27) at (6,2) {$\bullet$};
    
          \node[] (31) at (0,3) {$\bullet$};
    \node[] (32) at (1,3)  {$\bullet$};
    \node[] (33) at (2,3) {$\bullet$};
       \node[] (34) at (3,3)  {$\bullet$};
    \node[] (35) at (4,3) {$\bullet$};
       \node[] (36) at (5,3)  {$\bullet$};
    \node[] (37) at (6,3) {$\bullet$};
    
    \path[]
    (1) edge node {} (12)
    (12) edge node {} (23)
      (23) edge node {} (34)
    (34) edge node {} (25)
      (25) edge node {} (16)
    (16) edge node {} (7) ;
\end{tikzpicture}
\caption{$z^{0}z^{1}z^{2}$}
\end{center}
\end{figure}
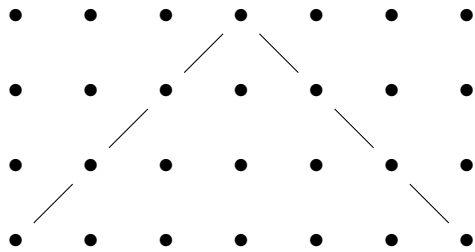

\end{document}